\newcommand{\myTitle}{Tunneling behavior of Ising and Potts models on grid graphs\xspace}
\newcommand{\myName}{Alessandro Zocca\xspace}
\definecolor{dred}{RGB}{220,0,0}
\definecolor{halfgray}{gray}{0.55}
\definecolor{webgreen}{rgb}{0,.5,0}
\definecolor{webbrown}{rgb}{.6,0,0}
\newcommand{\ie}{i.e., }
\newcommand{\ed}{\,{\buildrel d \over =}\,}
\newcommand{\cd}{\xrightarrow{d}}
\newcommand{\E}{\mathbb E}
\newcommand{\pr}[1]{\mathbb P \Big ( #1 \Big )}
\newcommand{\prin}[1]{\mathbb P ( #1 )}
\newcommand{\sut}{\mathbin{\lvert}}
\newcommand{\N}{\mathbb N}
\newcommand{\R}{\mathbb R}
\newcommand{\Z}{\mathbb Z}
\renewcommand{\b}{\beta}
\newcommand{\e}{\epsilon}
\newcommand{\h}{\eta}
\renewcommand{\o}{\omega}
\renewcommand{\l}{\lambda}
\newcommand{\s}{\sigma}
\newcommand{\G}{\Gamma}
\renewcommand{\L}{\Lambda}
\newcommand{\cX}{\mathcal{X}}
\renewcommand{\DH}{\Delta H}
\newcommand{\tising}{\tau^{\bm{-1}}_{\bm{+1}}}
\renewcommand{\ss}{\cX^s}
\newcommand{\binf}{\b \to \infty}
\newcommand{\limb}{\lim_{\binf}}
\newcommand{\rmexp}{\mathrm{Exp}}
\newcommand{\ck}{\mathbf{s}_k}
\newcommand{\cc}{\mathbf{s}} 
\newcommand{\dd}{\mathbf{s}'} 
\newcommand{\tcd}{\tau^{\cc}_{\dd}}
\newcommand{\xtbb}{\{X_{t}^\b\}_{t \in \N}}
\newtheorem{thm}{Theorem}[section]
\newtheorem{cor}[thm]{Corollary}
\newtheorem{lem}[thm]{Lemma}
\newtheorem{prop}[thm]{Proposition}
\title{Tunneling behavior of Ising and Potts models\\in the low-temperature regime} 
\date{\today}
\author{F.R.~Nardi \and A.~Zocca}
\begin{document}
\maketitle
\begin{abstract}
We consider the ferromagnetic $q$-state Potts model with zero external field in a finite volume and assume that the stochastic evolution of this system is described by a Glauber-type dynamics parametrized by the inverse temperature $\b$. Our analysis concerns the low-temperature regime $\binf$, in which this multi-spin system has $q$ stable equilibria, corresponding to the configurations where all spins are equal. Focusing on grid graphs with various boundary conditions, we study the tunneling phenomena of the $q$-state Potts model. More specifically, we describe the asymptotic behavior of the first hitting times between stable equilibria as $\binf$ in probability, in expectation, and in distribution and obtain tight bounds on the mixing time as side-result. In the special case $q=2$, our results characterize the tunneling behavior of the Ising model on grid graphs.
\end{abstract}

\section{Introduction and main results}
\label{sec1}

\subsection{Model description}
\label{sub11}
The Potts model is a canonical statistical physics model born as a natural extension~\cite{P52} of the Ising model in which the number of possible local spins goes from two to a general integer number $q \in \N$. 

The $q$-state Potts model is a spin system characterized by a set $S=\{1,\dots,q\}$ of spins values and by a finite graph $G = (V,E)$, which describes the spatial structure of the finite volume where the spins interact. A configuration $\s \in S^V$ assigns a spin value $\s(v) \in S$ to each vertex $v \in V$. We denote by $\cX=S^{V}$ the set of all possible spin configurations on the graph $G$. The edge set $E$ of the graph $G$ describes the pairs of vertices whose spins interact with each other. The \textit{Hamiltonian} or \textit{energy function} $H: \cX \to \R$ associates an energy with each configuration $\s \in \cX$ according to
\begin{equation}
\label{eq:energyfunction_Potts}
	H(\s) := - J_c \sum_{(v,w) \in E} \mathds{1}_{\{\s(v) = \s(w)\}}, \quad \s \in \cX,
\end{equation}
where  $J_c$ is the \textit{coupling or interaction constant}. Such an energy function corresponds to the situation where there is no external magnetic field and, in fact, $H(\s)$ describes only the local interactions between nearest-neighbor spins present in configuration $\s$. The \textit{Gibbs measure} for the $q$-state Potts model on $G$ is the probability distribution on $\cX$ defined by
\begin{equation}
\label{eq:gibbs}
	\mu_\b(\s):=\frac{e^{-\b H(\s)}}{\sum_{\s' \in \cX} e^{-\b H(\s')}}, \quad \s \in \cX,
\end{equation}
where $\b >0$ is the \textit{inverse temperature}.

The Potts model is called \textit{ferromagnetic} when $J_c >0$ and \textit{antiferromagnetic} when $J_c <0$. In the ferromagnetic case, the Gibbs measure $\mu_\b$ favors configurations where neighboring spins have the same value. On the contrary, in the antiferromagnetic case, neighboring spins are more likely not to be aligned. In this paper we focus on the ferromagnetic Potts model and, without loss of generality, we take $J_c=1$, since in absence of a magnetic field it amounts to rescaling of the temperature. 

We assume the spin system to evolve according to a stochastic Glauber-type dynamics described by a single-spin update Markov chain $\smash{\xtbb}$ on $\cX$ with transition probabilities between any pair of configurations $\s,\s' \in \cX$ given by
\begin{equation}
\label{eq:metropolistransitionprobabilities_Potts}
	P_\b(\s,\s'):=
	\begin{cases}
		Q(\s,\s') e^{-\b [H(\s')-H(\s)]^+}, 	& \text{ if } \s \neq \s',\\
		1-\sum_{\h \neq \s} P_\b(\s,\h), 	& \text{ if } \s=\s',
	\end{cases}
\end{equation}
where $Q$ is the \textit{connectivity matrix} that allows only single-spin updates, \ie for every $\s,\s' \in \cX$ we set
\begin{equation}
\label{eq:connectivityfunction_Potts}
	Q(\s,\s'):=
	\begin{cases}
		\frac{1}{q |V|}, 								& \text{if } \left |\{v \in V : \s(v)\neq \s'(v)\} \right |=1,\\
		0, 													& \text{if } \left |\{v \in V : \s(v)\neq \s'(v)\} \right |>1.
	\end{cases}
\end{equation}
The matrix $Q$ is clearly symmetric and irreducible, and the resulting dynamics $P_\b$ is reversible with respect to the Gibbs measure $\mu_\b$ given in~\eqref{eq:gibbs}. One usually refers to the triplet $(\cX, H, Q)$ as \textit{energy landscape} and to~\eqref{eq:metropolistransitionprobabilities_Potts} as \textit{Metropolis transition probabilities}.

The considered Metropolis dynamics can be described in words as follows. At each step a vertex $v \in V$ and a spin value $k \in S$ are selected independently and uniformly at random and the current configuration $\s \in \cX$ is updated in vertex $v$ to spin $k$ with a probability that depends only on the neighboring spins of $v$. More specifically, denote by $\s^{v,k}$ the configuration obtained from $\s$ by changing the spin of vertex $v$ into $k$ and calculate the energy difference
\[
	H(\s^{v,k}) - H(\s)= \sum_{w \sim v} \mathds{1}_{\{\s(w) = \s(v) \}} -  \mathds{1}_{\{ \s(w) = k \}} ,
\]
and the spin of vertex $v$ is updated to $k$ with probability $1$ if $H(\s^{v,k}) - H(\s) \leq 0$ or with probability $e^{-\b (H(\s^{v,k}) - H(\s))}$ if $H(\s^{v,k}) - H(\s) >0$. Two examples of this single-spin update dynamics are showed in Figures~\ref{fig:transition1} and~\ref{fig:transition2}. In both examples we consider the Potts model with $q=4$ and display only a single vertex $v$ and its neighbors. The four different spins are displayed using different colors via the mapping $\{1,2,3,4\} \longleftrightarrow \{\tikz\draw[fill=white] (0,0) circle (.75ex); , \, \tikz\draw[fill=gray!45!white] (0,0) circle (.75ex); , \, \tikz\draw[fill=gray] (0,0) circle (.75ex); , \, \tikz\draw[fill=black!75!gray] (0,0) circle (.75ex); \}$. In each example, we start from an initial configuration $\s$ (Figures~\ref{fig:transition1}(a) and~\ref{fig:transition2}(a)) and illustrate all the possible non-trivial transitions obtained by updating the spin in vertex $v$ to each of the other $q-1$ different spins.
\begin{figure}[!h]
	\centering
	\subfloat[Initial configuration $\s$]{\includegraphics{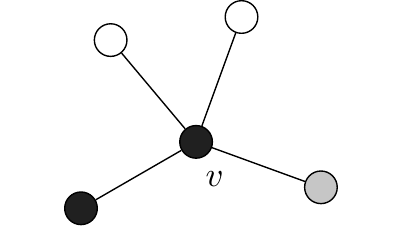}}
	\hspace{0.25cm}
	\subfloat[$H(\s^{v,1}) - H(\s) =-1$]{\includegraphics{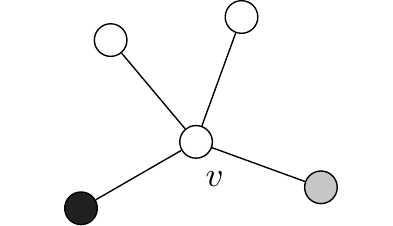}}
	\hspace{0.25cm}
	\subfloat[$H(\s^{v,2}) - H(\s) =0$]{\includegraphics{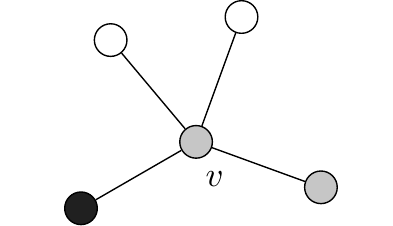}}
	\hspace{0.25cm}
	\subfloat[$H(\s^{v,3}) - H(\s) =1$]{\includegraphics{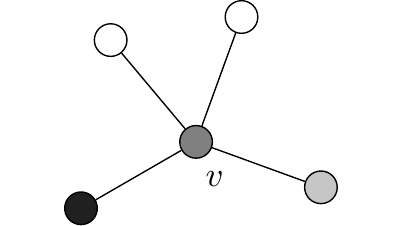}}
	\caption{List of possible non-trivial transitions (with the corresponding energy difference) from the initial configuration $\s$ with $\s(v)=4$ in (a) when the spin in vertex $v$ is updated.}
	\label{fig:transition1}
\end{figure}
\vspace{-0.5cm}
\begin{figure}[!h]
	\centering
	\subfloat[Initial configuration $\s$]{\includegraphics{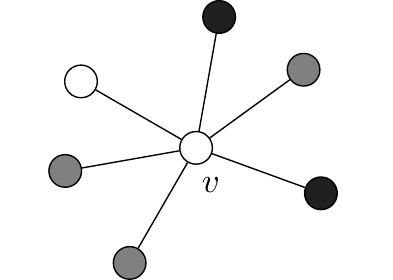}}
	\hspace{0.2cm}
	\subfloat[$H(\s^{v,2}) - H(\s) =1$]{\includegraphics{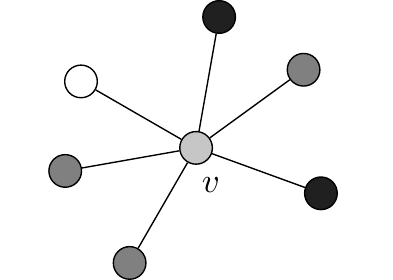}}
	\hspace{0.2cm}
	\subfloat[$H(\s^{v,3}) - H(\s)=-2$]{\includegraphics{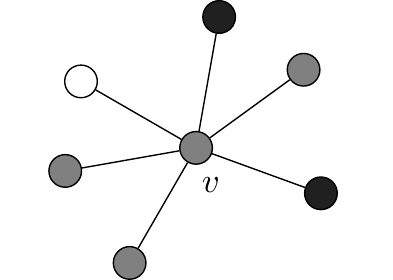}}
	\hspace{0.2cm}
	\subfloat[$H(\s^{v,4}) - H(\s) =-1$]{\includegraphics{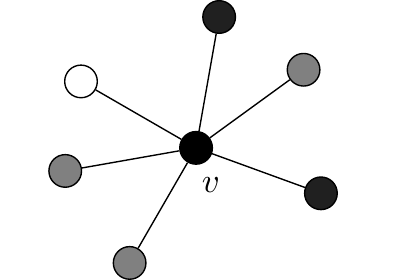}}
	\caption{List of possible non-trivial transitions (with the corresponding energy difference) from the initial configuration $\s$ with $\s(v)=1$ in (a) when the spin in vertex $v$ is updated.}
	\label{fig:transition2}
\end{figure}

\subsection{Main results}
\label{sub12}
In the present paper we focus on the analysis of the $q$-state ferromagnetic Potts model in the low-temperature regime $\binf$, where the spin system is in the so-called \textit{ordered phase} the coexistence of multiple equilibrium states. Indeed, in this regime the stationary distribution $\mu_\b$ concentrates around the global minima of the Hamiltonian $H$, which in the case of a connected graph $G$, are exactly $q$ and correspond to the configurations where all the spins have the same value. We denote them by $\cc_1,\dots,\cc_q$, with the convention that $\ck \in \cX$ is the configuration where all the spins are equal to $k$, namely $\ck(v) = k$ for every $v \in V$. In the following we will refer to them as the \textit{stable configurations} of the interacting spin system and denote their collection as $\ss$.

In the low-temperature regime these stable configurations and their basins of attraction become \textit{traps}, in the sense that the Markov chain $\smash{\xtbb}$ cannot move quickly between them. Intuitively, along any possible trajectory the Markov chain $\smash{\xtbb}$ must visit mixed-spin configurations that are highly unlikely in view of~\eqref{eq:gibbs} and the time to reach such configurations is correspondingly long. Due to these exponentially long transition times between stable configurations, the considered dynamics exhibits the so-called \textit{slow} or \textit{torpid mixing}. We characterize the low-temperature behavior of the $q$-state Potts model in terms of first hitting times and mixing times, which we will now introduce. 

For a nonempty subset $A \subset \cX$ and a configuration $\s \in \cX \setminus A$, we denote by $\tau^\s_A$ the \textit{first hitting time} of the subset $A$ for the Markov chain $\smash{\xtbb}$ with initial configuration $\s$ at time $t=0$, \ie
\[
	\tau^\s_A:=\inf \{ t >0 : X^{\b}_t \in A \sut X_0^\b=\s\}.
\]
The hitting time $\tau^\s_A$ is often called \textit{tunneling time} when both the starting and target configurations are stable configurations, \ie $\{\s\} \cup A \subseteq \ss$. For every $0 < \e < 1$ define the \textit{mixing time}  $t^{\mathrm{mix}}_\b(\e)$ of the Markov chain $\smash{\xtbb}$ as
\[
	t^{\mathrm{mix}}_\b(\e):=\min\{ n \geq 0 ~:~ \max_{x \in \cX} \| P^n_\b(x,\cdot) - \mu_\b(\cdot) \|_{\mathrm{TV}} \leq \e \},
\]
where $\| \nu - \nu' \|_{\mathrm{TV}}:=\frac{1}{2} \sum_{x \in \cX} |\nu(x)-\nu'(x)|$ denotes the total variation distance between two probability distributions $\nu,\nu'$ on $\cX$. The mixing time $t^{\mathrm{mix}}_\b(\e)$ describes the rate of convergence of the Markov chain $\smash{\xtbb}$ to its stationary distribution $\mu_\b$ and is intimately related to the  \textit{spectral gap} of the Markov chain, which is defined in terms of the eigenvalues $1=\l_{\b}^{(1)} > \l_{\b}^{(2)} \geq \dots \geq \l_\b^{(|\cX|)} \geq -1$ of the transition matrix $(P_\b(\s,\s'))_{\s,\s' \in \cX}$ as $\smash{\rho_\b := 1-\l_{\b}^{(2)}}$.

Our analysis focuses in the present paper on the dynamics of the $q$-state Potts model on finite two-dimensional rectangular lattices, to which we will simply refer to as \textit{grid graphs}. More precisely, given two integers $K,L \geq 2$, we will take the graph $G$ to be a $K \times L$ grid graph $\L$ with two possible boundary conditions: periodic and open.

The main result of this paper concerns the asymptotic behavior of the tunneling times between stable configurations: for any pair of stable configuratiosn $\cc, \dd$, we give asymptotic bounds in probability for $\tau^{\cc}_{\ss \setminus \{\cc\}}$ and $\tcd$, identify the order of magnitude of their expected values and prove that their asymptotic rescaled distribution is exponential. We further identify the precise exponent at which the mixing time of the Markov chain $\xtbb$ asymptotically grows as $\b$ and show that it depends up to a constant factor on the smaller side length of $\L$. 

\begin{thm}[Low-temperature behavior of the Potts model on grid graphs] \label{thm:main}
Consider the Metropolis Markov chain $\xtbb$ corresponding to the $q$-state Potts model on the $K \times L$ grid $\L$ with $\max \{K,L\} \geq 3$. Let $\G(\L)>0$ be the constant defined as
\begin{equation}
\label{eq:gammaL}
	\G(\L):=
	\begin{cases}
		2 \min\{K,L\} + 2 & \text{ if } \L \text{ has periodic boundary conditions},\\
		\min\{K,L\} +1 & \text{ if } \L \text{ has open boundary conditions}.
	\end{cases}
\end{equation}
Then, for any $\cc,\dd \in \ss$, $\cc \neq \dd$, the following statements hold:
\begin{itemize}
	\item[\textup{(i)}] For every $\e >0$ $\displaystyle \limb \pr{ e^{\b( \G(\L) - \e)} < \tau^{\cc}_{\ss \setminus \{\cc\}} \leq \tcd < e^{\b ( \G(\L) +\e)}} =1$;
	\item[\textup{(ii)}] $\displaystyle \limb \frac{1}{\b} \log \E \tcd =  \limb \frac{1}{\b} \log \E \tau^{\cc}_{\ss \setminus \{\cc\}} = \G(\L)$;
	\item[\textup{(iii)}] $ \displaystyle \frac{\tau^{\cc}_{\ss \setminus \{\cc\}}}{\E \tau^{\cc}_{\ss \setminus \{\cc\}}} \cd \rmexp(1), \quad \textup{ as } \binf$;
	\item[\textup{(iv)}] $ \displaystyle \frac{\tcd}{\E \tcd} \cd \rmexp(1), \quad \textup{ as } \binf$;
	\item[\textup{(v)}] For any $\e \in (0,1)$ $\displaystyle \limb \b^{-1} \log t^{\mathrm{mix}}_\b(\e) = \G(\L)$ and there exist two constants $0 < c_1 \leq c_2 < \infty$ independent of $\b$ such that
\begin{equation}
\label{eq:rho}
	\forall \, \b>0 \qquad c_1 e^{-\b \G(\L)} \leq \rho_\b \leq c_2 e^{-\b \G(\L)}.
\end{equation}
\end{itemize}
\end{thm}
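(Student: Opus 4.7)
The strategy is to reduce Theorem~\ref{thm:main} to standard model-independent metastability results by identifying the constant $\G(\L)$ with the communication energy barrier separating the stable configurations. For any $\s,\s' \in \cX$ set $\Phi(\s,\s') := \min_{\omega : \s \to \s'} \max_{\h \in \omega} H(\h)$, where the minimum runs over all admissible single-spin-flip paths, and for any $\s \in \cX \setminus \ss$ let its \emph{stability level} be $V_\s := \Phi(\s,\cI_\s) - H(\s)$, with $\cI_\s := \{\h \in \cX : H(\h) < H(\s)\}$. The heart of the proof consists in establishing the two identities
\[
	\Phi(\cc,\dd) - H(\cc) = \G(\L) \quad \forall\, \cc,\dd \in \ss,\ \cc \neq \dd, \qquad \text{and} \qquad \max_{\s \in \cX \setminus \ss} V_\s < \G(\L).
\]
Once these are in hand, parts~(i)--(v) follow from general theorems for reversible Metropolis dynamics on a finite state space (in the spirit of Freidlin--Wentzell, Olivieri--Vares, Manzo--Nardi--Olivieri--Scoppola and Bovier--den Hollander), which translate precise knowledge of the energy landscape directly into the asymptotic probability and expectation estimates for hitting times, into their exponential distributional limit, and into the mixing time and spectral gap bounds of~\eqref{eq:rho}.

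For the upper bound $\Phi(\cc,\dd) - H(\cc) \leq \G(\L)$ the plan is to exhibit an explicit reference path from $\cc$ to $\dd$ that progressively flips single spins from $\cc(v)$ to $\dd(v)$ in a carefully chosen order. On the open grid such a path grows a quasi-square droplet of the new color anchored in a corner; its maximum energy excess above $H(\cc)$ is reached when the droplet is a slab of width $\min\{K,L\}$ with one additional protuberance attached, yielding exactly $\min\{K,L\}+1$. On the torus the reference path grows a straight stripe that eventually wraps around the shorter side; its peak energy cost is $2\min\{K,L\}+2$, with the extra $+2$ accounting for the unavoidable nucleation and closure protuberances. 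The matching lower bound $\Phi(\cc,\dd) - H(\cc) \geq \G(\L)$ is the main obstacle: along any path from $\cc$ to $\dd$ and for each integer $0 < m < |V|$ there must be an intermediate configuration $\h$ whose set of non-$\cc$ vertices has exactly $m$ elements, and the energy excess $H(\h) - H(\cc)$ is bounded below by the number of bichromatic edges incident to this set. This quantity should then be lower-bounded by a careful edge-isoperimetric argument on $\L$, adapted to two Potts-specific complications absent in the Ising case: the minority region may be painted with up to $q-1$ different colors (so edges between two distinct non-$\cc$ colors also contribute to the excess energy), and in the periodic case the minority region may admit homotopically nontrivial components winding around the torus.

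The second identity $\max_{\s \in \cX \setminus \ss} V_\s < \G(\L)$ is established by a descent argument: given any non-monochromatic $\s$ one exhibits a vertex $v$ whose spin disagrees with at least half of its neighbors, so that flipping $v$ weakly decreases the energy; iterating this local move drives the chain to a state in $\cI_\s$ while never raising the energy along the way by more than a constant strictly smaller than $\G(\L)$. The main obstacle in the whole program is the isoperimetric lower bound described in the previous paragraph, particularly in the periodic case, where several competing interface geometries need to be ruled out and one must verify that the cheapest one is indeed the straight wrapping stripe of width $2\min\{K,L\}$ together with its completion protuberance; in the open case an analogous but simpler corner analysis produces the value $\min\{K,L\}+1$.
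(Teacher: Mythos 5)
Your high-level strategy matches the paper's: establish the two energy-landscape facts $\Phi(\cc,\dd)-H(\cc)=\G(\L)$ for all pairs of stable configurations and $\Phi(\s,\ss)-H(\s)<\G(\L)$ for all $\s\notin\ss$ (the paper's Theorem~\ref{thm:chadw}), and then invoke model-independent metastability machinery (the paper uses~\cite{NZB16}). The upper-bound reference path is also essentially the same in spirit. But there are two substantive deviations worth flagging, one of which is a genuine gap.

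First, your lower bound on $\Phi(\cc,\dd)$ is proposed as an edge-isoperimetric argument on the volume of the non-$\cc$ region, augmented by a ``protuberance'' correction. The paper instead uses a structural argument built on \emph{bridges} (monochromatic rows/columns): it tracks $B_k(\cdot)$, the number of $k$-bridges, along the path, isolates the first configuration $\o_{m^*}$ with $B_k\ge 2$, and shows case-by-case (two parallel bridges, or a cross, with sub-cases on $B_k(\o_{m^*-1})\in\{0,1\}$) that $\o_{m^*-1}$ already carries energy excess at least $2\min\{K,L\}+2$ via Lemma~\ref{lem:zerowastage}. Your route is the classical Neves--Schonmann style count, which is plausible for Ising on an open box but becomes delicate for Potts on a torus: the isoperimetric minimum for a band-shaped region on a $K\times L$ torus is exactly $2\min\{K,L\}$, and recovering the $+2$ requires showing that a path \emph{transiting through} any fixed volume cannot sit at the isoperimetric optimum, which is precisely the step you wave at but do not supply, and which is further complicated by the fact that the minority region may carry several colors. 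Your plan could likely be made to work, but the bridge argument in the paper is both sharper and self-contained, and it sidesteps the multicolor and homotopy issues you raise as obstacles.

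Second, and more seriously, your proposed proof of absence of deep wells is incorrect as stated. You claim that for any non-monochromatic $\s$ there is a vertex disagreeing with at least half its neighbors, and that iterating such weakly-decreasing flips reaches $\cI_\s$. This descent argument gets stuck at non-monochromatic local minima: on a torus a monochromatic band of any width is a local minimum of $H$, so no single-spin flip weakly decreases the energy by a strictly positive amount, and pure (weak) descent can cycle indefinitely without ever hitting a configuration of strictly lower energy. To prove $\Phi(\s,\ss)-H(\s)<\G(\L)$ one must allow controlled energy increases; the paper does this explicitly by picking the column with the largest monochromatic majority, raising the energy by at most $2(m-1)$ (with $m<K$ the number of minority vertices in that column) to complete a bridge there, and then running the expansion algorithm (Proposition~\ref{prop:expansion}) at an additional energy cost of at most $2$, giving a total strictly below $2K+2$ (and analogously $<K+1$ in the open case). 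You would need to replace your descent step by an argument of this kind; as written, the claim that the path ``never raises the energy'' is incompatible with escaping the stripe local minima, so the second identity is not actually established.
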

We remark that in the low-temperature limit the total number $q$ of possible spin values does not appear in our main result because we focus on \textit{logarithmic} equivalences and the number $q$ does not affect the \textit{order of magnitude} of the tunneling times and neither that of the mixing time. This is the case also for analogous results for mixing times of heat-bath and Swenden-Wang dynamics derived in~\cite{Borgs2012}, for which the dependence on the grid side length is the same. The bounds in~\cite{Borgs2012} are valid for a more general $d$-dimensional grid, while ours are specialized for the case $d=2$, for which we obtain sharper exponents.

From our analysis it is easy to derive analogous results for a $K \times L$ grid graph $\L$ with semi-periodic boundary conditions (\ie periodic on the horizontal boundaries and open on the vertical ones), in which the the value that the exponent $\G(\L)$ would be $\min\{K+2,2L+1\}$. We expect that analogous results hold also for rectangular regions $\L$ of other lattices (e.g.~triangular, hexagonal, Kagome lattices) with an exponent $\G(\L)$ that would depend, up to a constant, on the minimum side length.\\

In the particular case in which there are only $q=2$ spin values, the Potts model reduces to Ising model with no external magnetic field, which has exactly two stable configurations that we denote as $\ss=\{\bm{-1},\bm{+1}\}$, see Figure~\ref{fig:colorconventionsising} for an illustration. In the following corollary we rewrite our main result for the tunneling time for the Ising model.

\begin{cor}[Low-temperature behavior of the Ising model on grid graphs] \label{thm:main_ising}
Consider the Metropolis Markov chain $\xtbb$ corresponding to the Ising model on the $K \times L$ grid $\L$ with $\max \{K,L\} \geq 3$ and define the constant $\G(\L)>0$ as in~\eqref{eq:gammaL}. Then
\begin{itemize}
	\item[\textup{(i)}] For every $\e >0$ $\displaystyle \limb \pr{ e^{\b( \G(\L) - \e)} < \tising < e^{\b ( \G(\L) +\e)}} =1$;
	\item[\textup{(ii)}] $\displaystyle \limb \frac{1}{\b} \log \E \tising =  \G(\L)$;
	\item[\textup{(iii)}] $ \displaystyle \frac{\tising}{\E \tising} \cd \rmexp(1), \quad \textup{ as } \binf$;
	\item[\textup{(iv)}] For any $\e \in (0,1)$ $\displaystyle \limb \b^{-1} \log t^{\mathrm{mix}}_\b(\e) = \G(\L)$ and there exist two constants $0 < c_1 \leq c_2 < \infty$ independent of $\b$ such that
\begin{equation}
\label{eq:rhoIsing}
	\forall \, \b>0 \qquad c_1 e^{-\b \G(\L)} \leq \rho_\b \leq c_2 e^{-\b \G(\L)}.
\end{equation}
\end{itemize}
\end{cor}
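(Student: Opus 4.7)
The corollary is a direct specialization of Theorem~\ref{thm:main} to $q=2$, and my plan is to make this reduction explicit. First I would observe that, with $q=2$, the ferromagnetic Potts Hamiltonian~\eqref{eq:energyfunction_Potts} coincides, up to an additive constant and a factor of $2$ that can be absorbed into $\b$, with the zero-field Ising Hamiltonian $-\sum_{(v,w)\in E}\sigma(v)\sigma(w)$ under the identification $\{1,2\}\longleftrightarrow\{-1,+1\}$; in particular the Metropolis dynamics~\eqref{eq:metropolistransitionprobabilities_Potts} becomes precisely the single-spin-flip Glauber chain for Ising, and the set of stable configurations reduces to $\ss=\{\bm{-1},\bm{+1}\}$.

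The key observation is that, because $|\ss|=2$, for either choice of $\cc\in\ss$ the set $\ss\setminus\{\cc\}$ is the singleton consisting of the opposite configuration. Taking $\cc=\bm{-1}$ and $\dd=\bm{+1}$ (the other choice being symmetric by the $\bm{-1}\leftrightarrow\bm{+1}$ symmetry of $H$) yields
\[
    \tau^{\cc}_{\ss\setminus\{\cc\}} \;=\; \tcd \;=\; \tising,
\]
so the two hitting times that appear separately in Theorem~\ref{thm:main} collapse to the single random variable $\tising$. Under this identification, parts (i)--(iv) of Theorem~\ref{thm:main} deliver parts (i)--(iii) of the corollary (parts (iii) and (iv) of the theorem becoming the same statement), while part (v) of Theorem~\ref{thm:main}, including the spectral gap estimates, yields part (iv) of the corollary and bounds~\eqref{eq:rhoIsing}. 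The value of $\G(\L)$ defined in~\eqref{eq:gammaL} requires no modification since its expression is independent of $q$.

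The only nontrivial check is that no step in the proof of Theorem~\ref{thm:main} tacitly requires $q\geq 3$. The potentially delicate point is the identification of the communication height between two stable configurations with $\G(\L)$, which proceeds via a critical-droplet argument; however, that argument uses only the presence of \emph{some} spin value different from that of the starting stable configuration, and therefore applies verbatim when $q=2$. I consequently anticipate no real obstacle in deducing Corollary~\ref{thm:main_ising} from Theorem~\ref{thm:main}.
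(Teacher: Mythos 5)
Your proposal is correct and matches the paper's approach exactly: Corollary~\ref{thm:main_ising} is presented in the paper as a direct restatement of Theorem~\ref{thm:main} for $q=2$, where $|\ss|=2$ forces $\tau^{\cc}_{\ss\setminus\{\cc\}}=\tcd=\tising$ and collapses parts (iii) and (iv) of the theorem into the single statement (iii) of the corollary. Your sanity check on the $q\geq 3$ issue is also consistent with the paper, whose proof of Theorem~\ref{thm:main}(iv) explicitly notes that when $q=2$ statements (iii) and (iv) coincide and there is nothing extra to prove.
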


Similar results for the hitting times of the Ising model have already been proved in~\cite{Thomas1989}. More precisely, the following lower bound for the expected hitting times of a certain subset of states $S_{\L}$ on the $d$-dimensional cube $\L \subset \Z^d$ of side $L$:
\[
	\max_{\s \in \cX} E \tau^\s_{S_\L} \geq c(\b) e^{\beta \alpha^* L^{d-1}},
\]
where $\alpha^*$ is a constant independent of $\L$, $\beta$ and $c(\beta)>0$ is a function that does not depend on $\L$. Since $\bm{+1} \in S_{\L}$, our result can be seen as a refinement of~\cite[Proposition 2.3]{Thomas1989} in dimension $d=2$, as (a) we identify the precise constant $\alpha^*$ showing how it depends on the type of boundary conditions, (b) we indirectly prove that $\limb \frac{1}{\b} \log c(\beta) = 0$, and (c) we derive a matching upper bound.

Furthermore, statement (iv) in Corollary~\ref{thm:main_ising} improves the estimates on the spectral gap presented in~\cite[Proposition 2.5]{Thomas1989}, since as illustrated in~\eqref{eq:rhoIsing} also for this quantity we identify the exact exponent and obtain a matching upper bound. In the special case of open boundary conditions, our result for the spectral gap should be compared with the estimates given in~\cite[Theorem 4.1]{Martinelli1994} (valid for more general dynamics) and the asymptotics for $L \to \infty$ proved in~\cite[Theorem 1.4]{Cesi1996}. For related results concerning the equilibrium properties of the Ising model on finite lattices with zero magnetic field see also~\cite{Chayes1987}.

\subsection{Related results and discussion}
\label{sub13}
The Potts model is one of the most studied statistical physics models and is named after Renfrey Potts, who introduced the model in his Ph.D.~thesis~\cite{P52} in 1951. The model was related to the ``clock model'' or  ``planar Potts'', a variant of which was introduced earlier in~\cite{AT43} and is known as the \textit{Ashkin-Teller model}

The Potts model has been studied so extensively both by mathematicians and physicists that an exhaustive review of the related literature would be very long and out of the scope of this paper. Nevertheless, we now outline some related work that focus on the equilibrium or dynamical properties of the Potts model that are most relevant for this paper.

The equilibrium properties of the Potts model, such as the phase transition, critical temperature, and their dependence on $q$, have been studied on various infinite graphs, such as the square lattice $\mathbb Z^d$~\cite{Baxter1973,Baxter1982b}, the triangular lattice~\cite{Baxter1978,EW82}, and the Bethe lattice~\cite{Ananikyan1995,Aguiar1991,DiLiberto1987}. If the underlying structure is described instead by a complete graph, then we obtain the mean-field version of Potts model, also known as \textit{Curie-Weiss Potts model}, which received a lot of attention in the literature~\cite{Costeniuc2005,Ellis1990,Ellis1992,Gandolfo2010,Wang1994}.

Another branch of research focuses more on the dynamical properties of the Potts model, investigating in particular mixing times for various types of dynamics, the most studied ones being Glauber~\cite{BCP16,BCFKTVV99,Cuff2012,Galvin2012b,GKRS12,Galvin2007,Gheissari2016,Gheissari2016a,GJMP06,Jerrum1995}, Swendsen-Wang dynamics~\cite{BCFKTVV99,Borgs2012,Cooper2000,Cooper1999,GSV15,Gheissari2016,Gheissari2016b,Gheissari2017,Ullrich2013}. The focus of this part of literature is to describe at a given temperature how the mixing time grows as a function the graph size $n=|V|$ and the number of colors $q$. In particular, the goal is to distinguish whether the considered dynamics has fast or slow mixing depending on the type of the graph and its properties, such as boundary conditions or dimensions in the specific case of grid graphs.

Metastability is a dynamical property with a similar flavor as tunneling that has been studied for various spin systems. In particular, the metastability for the mean-field $3$-state Potts model with a non-reversible dynamics has been analyzed in~\cite{Landim2016} for fixed temperature in the thermodynamic limit. In this paper we focus instead on the $q$-state Potts model on finite volume with Metropolis dynamics, for which we obtained the results for tunneling outlined in Theorem~\ref{thm:main}.

There is an extensive literature concerning the metastable behavior for the Ising model on square lattices with Glauber dynamics, which relates with the results presented in Corollary~\ref{thm:main_ising}. More specifically, results in finite-volume case have been derived in~\cite{BL11,BAC96,BM02,CL98,KO93,KO94,NS91} and in the infinite-volume case in~\cite{BdHS10,CM13,Dehghanpour1997,SS98}. Results have been obtained for the metastability of the Ising model also on the hypercube~\cite{Jovanovski2017} and on certain types of random graphs~\cite{Dommers2016,Dommers2017}. Another related $3$-spin system which has been studied with similar techniques in~\cite{CN13,Cirillo1996,MO01} is the \textit{Blume-Capel model}. Tunneling phenomena have been studied also for other models with Metropolis dynamics, such as the \textit{hard-core model}~\cite{NZB16,Zocca2017} and the \textit{Widom-Rowlinson model}~\cite{Zocca2017b}.

In the present paper we study the low-temperature behavior of the Potts model using the \textit{pathwise approach} (see~\cite{OV05} for a sistematic overview and further references) and its more recent extensions~\cite{CNS14b,MNOS04,NZB16}, but also other techniques have been successfully used in the literature to study tunneling and metastability phenomena, e.g.~the \textit{potential theoretical approach} (introduced in~\cite{BEGK02}, for an overview see the recent book~\cite{Bovier2015a}) and the \textit{martingale approach}~\cite{BeltranLandim10,BL11,BL14}.

\section{Geometry of Potts configurations and energy landscape analysis}
\label{sec2}

This section is devoted to the analysis of some geometrical and combinatorial properties of the Potts configurations on grid graphs. This analysis will then be leveraged to prove some structural properties of the energy landscape $(\cX,H,Q)$ of the Potts model on grid graphs, which are presented in Theorem~\ref{thm:chadw}. These properties are precisely the model-dependent characteristics that are needed to exploit the general framework developed in~\cite{NZB16} to derive the main result presented in Subsection~\ref{sub12} for the asymptotic behavior of the Potts model in the low-temperature regime.

We first introduce some definition and notation that will be used in the rest of the paper. The connectivity matrix $Q$ given in~\eqref{eq:connectivityfunction_Potts} is irreducible, \ie for any pair of configurations $\s,\s'\in \cX$, $\s \neq \s'$, there exists a finite sequence $\o$ of configurations $\o_1,\dots,\o_n \in \cX$ such that $\o_1=\s$, $\o_n=\s'$ and $Q(\o_i,\o_{i+1})>0$, for $i=1,\dots, n-1$. We will refer to such a sequence as a \textit{path} from $\s$ to $\s'$ and we will denote it by $\o: \s \to \s'$. Given a path $\o=(\o_1,\dots,\o_n)$, we define its \textit{height} $\Phi_\o$ as
\begin{equation}
\label{eq:pathheight}
	\Phi_\o:= \max_{i=1,\dots,n} H(\o_i).
\end{equation}
The \textit{communication energy} between two configurations $\s,\s' \in\cX$ is the minimum value that has to be reached by the energy in every path $\o: \s \to \s'$, \ie
\begin{equation}
\label{eq:ch}
	\Phi(\s,\s') := \min_{\o : \s \to \s'} \Phi_\o = \min_{\o : \s \to \s'} \max_{\h \in\o} H(\h). 
\end{equation}
Given two nonempty disjoint subsets $A,B \subset \cX$, we define the communication energy between $A$ and $B$ by
\begin{equation}
\label{eq:chAB}
	\Phi(A,B) := \min_{\s \in A, \, \s' \in B} \Phi(\s,\s').
\end{equation}

\begin{thm}[Structural properties of energy lanscape] \label{thm:chadw}
Consider the energy landscape $(\cX, H, Q)$ corresponding to the Potts model on a $K \times L$ grid $\L$. Then:
\begin{itemize}
\item[\textup{(i)}] For every $\cc,\dd \in \ss$, $\cc \neq \dd$
\[
	 \Phi(\cc,\dd) -H(\cc)=  \G(\L) =
	 \begin{cases}
	 	2 \min\{K,L\}+2 		& \text{ if } \L \text{ has periodic boundary conditions},\\
	 	\min\{K,L\}+1 			& \text{ if } \L \text{ has open boundary conditions}.
	 \end{cases}
\]
\item[\textup{(ii)}] The following inequality holds:
\begin{equation}
\label{eq:adw}
	\Phi(\s,\ss) - H(\s) <  \G(\L) \qquad \forall \, \s \in \cX \setminus \ss.
\end{equation}
\end{itemize}
\end{thm}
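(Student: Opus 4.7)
\noindent\emph{Proof plan.} The proof splits into establishing matching upper and lower bounds on $\Phi(\cc,\dd)-H(\cc)$ for part (i), and a constructive strict inequality for part (ii).

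For the upper bound $\Phi(\cc,\dd)-H(\cc)\leq\G(\L)$ in (i), fix representatives $\cc=\mathbf{s}_k$ and $\dd=\mathbf{s}_{k'}$ with $k\neq k'$, and assume without loss of generality $L\leq K$. I construct a reference path $\o:\cc\to\dd$ that grows a droplet of color $k'$ inside the sea of color $k$, one vertex at a time. In the open boundary case the droplet is a rectangle anchored at a corner, grown column by column along the short direction: each successive column is added by first flipping a corner or protuberance vertex (incurring $\Delta H\in\{+1,+2\}$), then propagating the flip along the column with $\Delta H=0$ at each interior step and $\Delta H\in\{-1,0\}$ at the final boundary step. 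A direct bookkeeping shows the maximum excess energy along the whole path equals $L+1$. For periodic boundary, any proper droplet has two interfaces due to wrap-around, so the reference path grows a strip spanning the short direction; adding each successive column requires pushing both interfaces simultaneously, which yields maximum excess $2L+2$. In both cases the excess matches $\G(\L)$.

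The lower bound $\Phi(\cc,\dd)-H(\cc)\geq\G(\L)$ is the main technical step. Since $H(\s)-H(\cc)=B(\s)$, where $B(\s)$ is the number of edges with differently-colored endpoints, it suffices to show that every path $\o:\cc\to\dd$ visits some configuration $\s^*$ with $B(\s^*)\geq\G(\L)$. I intend to use a slicing argument: for each $\s$ along $\o$, decompose $B(\s)$ into horizontal and vertical color-change contributions. Because $\o$ starts and ends at monochromatic configurations, monotone quantities such as the number of rows completely of color $k$ must decrease along the path, and I can isolate an intermediate step at which simultaneously several rows and several columns each contain a color change. An isoperimetric estimate on $\L$ then forces $B(\s^*)\geq L+1$ in the open case, while in the periodic case the absence of boundary means every non-trivial color-class has two interfaces along at least one axis, yielding $B(\s^*)\geq 2L+2$. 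A subtlety specific to $q\geq 3$ is that $\o$ may detour through intermediate colors; I plan to handle this by a two-color reduction at the critical step, merging non-dominant colors into the two dominant sides without decreasing the count of broken bonds.

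For statement (ii), I exhibit for each non-stable $\s\in\cX\setminus\ss$ a path from $\s$ to some $\mathbf{s}_{k^*}$ whose height stays strictly below $H(\s)+\G(\L)$. The strategy is to choose $k^*$ as the color of a suitable non-trivial cluster in $\s$ and grow it to fill $\L$ using the column-by-column construction from the upper bound, but starting from the existing cluster rather than a single seed. Because $\s$ already realizes at least one interface, at least one protuberance-type flip that would cost $+1$ in the reference path is already paid for in $\s$, strictly reducing the barrier by at least one unit. The main obstacle is making this "head start" argument uniform across all non-stable configurations; I expect this will proceed by a case analysis on the minimal enclosing rectangle of the chosen cluster (corner, axis-spanning strip, or wrapped strip in the periodic case), and in each case relying on the path construction of part (i). Overall, the hardest part of the proof is the lower bound in (i), where the combination of isoperimetric geometry with the single-spin-update dynamics requires delicate bookkeeping.
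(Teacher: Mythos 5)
Your upper-bound construction in part~(i) is essentially the paper's: grow a single strip along the shorter dimension to create a monochromatic bridge, then expand that bridge column by column (the ``expansion algorithm''), with per-step cost $+2/0/-2$ (periodic) or $+1/0/-1$ (open). That part is fine, modulo some confusion about which of $K,L$ indexes the short side.

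The lower bound is where there is a genuine gap. You propose to ``isolate an intermediate step at which simultaneously several rows and several columns each contain a color change'' and then invoke an ``isoperimetric estimate''. This does not identify \emph{which} step to examine, and it cannot deliver the exact constant $2\min\{K,L\}+2$ (respectively $\min\{K,L\}+1$) — only its order of magnitude, which is not the content of part~(i). The paper's mechanism is a specific stopping index: with $B_k(\s)$ the number of monochromatic rows/columns of the target color $k$ (``$k$-bridges''), take $m^*$ to be the first time along the path that $B_k\ge 2$. At $\o_{m^*-1}$ one has $B_k\le 1$, and a three-way case analysis on the two new bridges appearing at step $m^*$ (both vertical, both horizontal, or orthogonal, i.e.~a $k$-cross) combined with the elementary observation that any row/column without a bridge contributes $\ge 2$ (periodic) or $\ge 1$ (open) to the energy gap yields the bound. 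Crucially, this argument never restricts attention to the $k$-cluster alone and never needs isoperimetry; the ``two-color reduction'' you plan for $q\ge3$ is unnecessary because the row/column energy lemma is blind to how the non-$k$ colors are distributed, and because for $q\ge3$ the $k$-cluster may never be large enough for an isoperimetric bound on it to bite.

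For part~(ii), the ``head start'' heuristic — a non-stable $\s$ has already paid for at least one flip, hence the barrier drops by one — conflates cumulative cost with path height and does not by itself give strict inequality in~\eqref{eq:adw}. The paper's argument is sharper: if $\s$ has a bridge, the expansion algorithm reaches $\ss$ at cost $\le 2$ (resp.~$\le1$), far below $\G(\L)$; if $\s$ has no bridge, choose the column with the largest monochromatic majority, say $m<K$ vertices must still be recolored, and complete that column choosing at each step a vertex adjacent to an already-recolored one, for a cost $\le 2(m-1)+2=2m<2K<\G(\L)$, before expanding. It is the strict inequality $m<\min\{K,L\}$ — guaranteed because otherwise $\s$ would have a bridge — that yields the strict gap, not the existence of one pre-paid interface.
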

Property~\eqref{eq:adw} is usually referred to as \textit{absence of deep wells}. Theorem~\ref{thm:chadw} explains why is precisely the constant $\Gamma(\L)$ defined in~\eqref{eq:gammaL} that appears in Theorem~\ref{thm:main} and Corollary~\ref{thm:main_ising}, characterizing the behavior of tunneling times and mixing times in the low-temperature regime. 

The rest of the section is organized as follows: in Subsection~\ref{sub21} we introduce some useful notation and definitions that will be used throughout the section, while in Subsection~\ref{sub22} we describe the geometric properties of Potts configurations that will be of interest for our analysis. Later, in Subsection~\ref{sub23} we present an \textit{expansion algorithm} for Potts configurations on grid graphs, that will be leveraged in Subsection~\ref{sub24} to build paths between stable configurations. Subsection~\ref{sub25} is devoted to the derivation of lower bounds for the communication height between stable configurations. Lastly, in Subsection~\ref{sub26} we present the proof of Theorem~\ref{thm:chadw}, that combines the expansion algorithm introduced in Subsection~\ref{sub23} and the inequalities derived in Subsections~\ref{sub24} and~\ref{sub25}.

\subsection{Definitions and notation}
\label{sub21}
In this subsection we introduce some notation and definitions tailored for the Potts model on a grid graph $\L$ (valid regardless of the chosen boundary conditions, unless specified otherwise) that will be used in the rest of the section.

A $K \times L$ grid graph $\L=(V,E)$ has vertex set $V=\{0,\dots,L-1\} \times \{0,\dots,K-1\}$ and every vertex $v \in \L$ is naturally identified by its coordinates $(v_1, v_2)$, where $v_1$ denotes the column and $v_2$ the row where $v$ lies. We denote by $c_j$, $j=0,\dots,L-1$, the $j$-th column of $\L$, \ie the collection of vertices whose horizontal coordinate is equal to $j$, and by $r_i$, $i=0,\dots,K-1$, the $i$-th row of $\L$, \ie the collection of vertices whose vertical coordinate is equal to $i$. With a minor abuse of notation, we will write $(v,w) \in r_i$ when $(v,w) \in E$ is a horizontal edge that links two vertices $v,w$ both on row $r_i$. Similarly $(v,w) \in c_j$ when  $(v,w) \in E$ is a vertical edge that links two vertices on column $c_j$.

It is convenient to visualize a $K\times L$ grid graph $\L$ by means of $K L$ squares, each of them corresponding to a vertex of the grid graph $\L$, as illustrated in Figure~\ref{fig:squares}. Note that this representation respects the adjacency relations: the neighbors of a given vertex $v$ are in one-to-one correspondence with the squares that share an edge with the square corresponding to $v$. In particular, this equivalent representation corresponds to the Peierls contours on the dual graph $\L+\left (\frac{1}{2},\frac{1}{2} \right )$.
\begin{figure}[!h]
	\centering
	\subfloat[Grid graph with periodic boundary conditions with highlighted two vertices, $v$ and $w$, and their corresponding neighbors]{\includegraphics{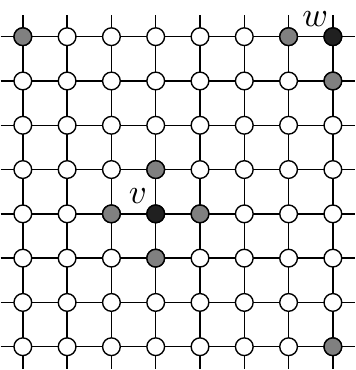}}
	\hspace{0.5cm}
	\subfloat[Equivalent representation of the grid graph with periodic boundary conditions with highlighted the squares corresponding to $v, w$ and their corresponding neighborhoods]{\includegraphics{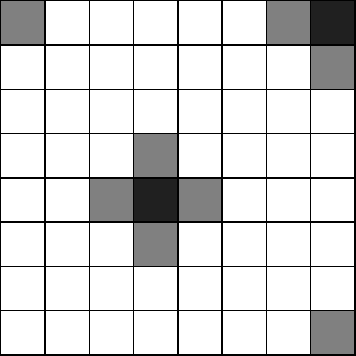}}
	\hspace{0.5cm}
	\subfloat[Grid graph with open boundary conditions with highlighted two vertices, $v$ and $w$, and their corresponding neighbors]{\includegraphics{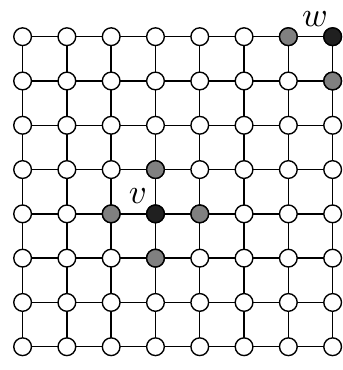}}
	\hspace{0.5cm}
	\subfloat[Equivalent representation of grid graph with open boundary conditions with highlighted the squares corresponding to $v, w$ and their corresponding neighborhoods]{\includegraphics{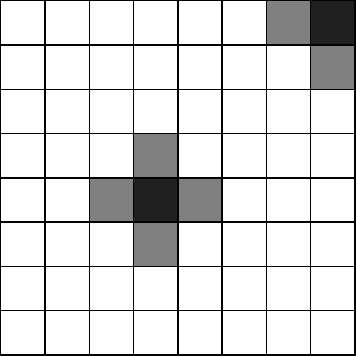}}
	\caption{Equivalent representation of a spin system on a $8\times 8$ grid graphs with different boundary conditions}
	\label{fig:squares}
\end{figure}
\FloatBarrier

For brevity, we will interchangeably refer to the spin value of a vertex using its color, since we can define a one-to-one mapping between spin values and colors as we did in Figures~\ref{fig:transition1} and~\ref{fig:transition2}. This convention and the equivalent representation with squares should help the reader to visualize $q$-state Potts configurations on a grid graph $\L$ as collection of clusters of $q$ different colors, see three examples in Figure~\ref{fig:colorconventions}.

\begin{figure}[!h]
	\centering
	\subfloat[Potts configuration with $q=2$ \label{fig:ccA}]{\includegraphics{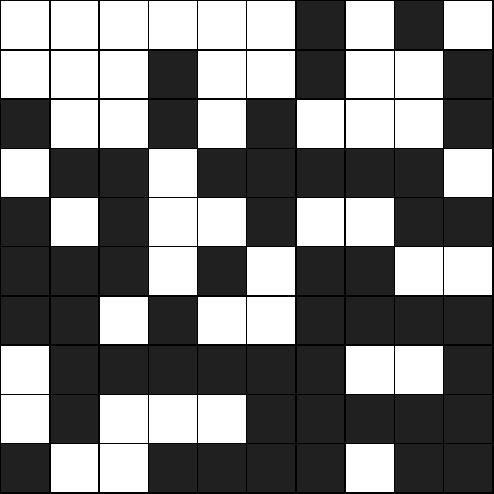}}
	\hspace{0.6cm}
	\subfloat[Potts configuration with $q=3$]{\includegraphics{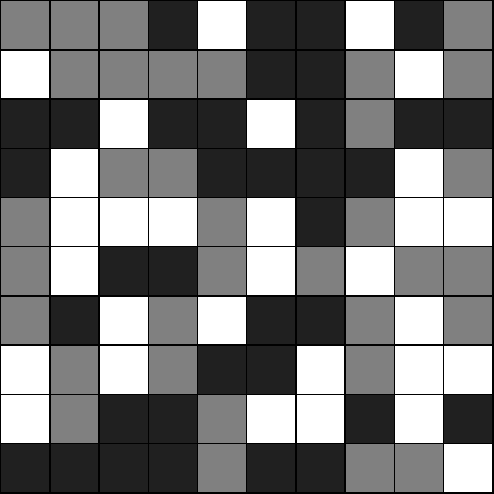}}
	\hspace{0.6cm}
	\subfloat[Potts configuration with $q=4$]{\includegraphics{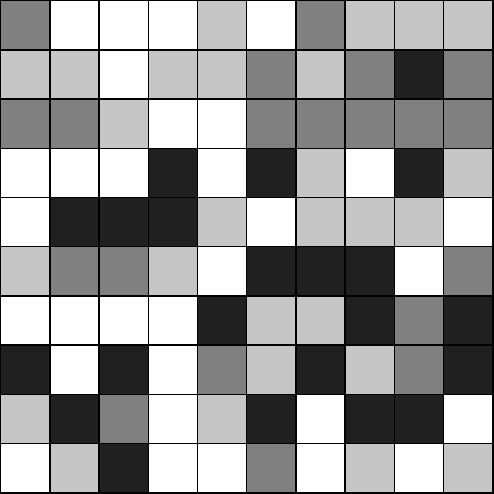}}
	\caption{Examples of Potts configuration on the $10 \times 10$ grid}
	\label{fig:colorconventions}
\end{figure}

\FloatBarrier
\newpage
Note that in the special case of $q=2$ spin values, the Potts model reduces to the classical Ising model, in which the two spin values are usually identified using the symbols $+$ and $-$, as illustrated in Figure~\ref{fig:colorconventionsising}. However, since in this paper we are interested in the case of a general $q \in \N$, we will use only the visualization using colors as in Figure~\ref{fig:colorconventions}.
\begin{figure}[!h]
	\centering
	\includegraphics{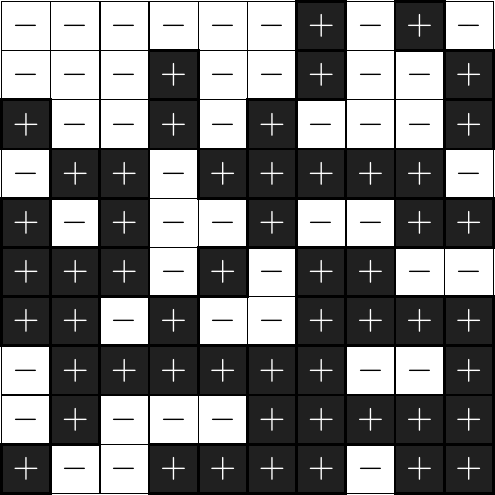}
	\caption{The Ising configuration on the $10 \times 10$ grid corresponding to that in Figure~\ref{fig:ccA}}
	\label{fig:colorconventionsising}
\end{figure}

Define the \textit{energy gap} $\DH(\s)$ of a configuration $\s \in \cX$ as the difference between its energy and the energy of any stable configuration, \ie 
\begin{equation}
\label{eq:def_DH}
	\DH(\s):=H(\s) - H(\cc), \quad \text{ for any } \cc \in \ss.
\end{equation}
Given a configuration $\s \in \cX$, we call an edge $e =(v,w)\in E$ \textit{disagreeing} if it connects two vertices with different colors, \ie $\s(v)\neq \s(w)$. From the definition of the energy function~\eqref{eq:energyfunction_Potts}, it follows that $\DH(\s)$ is equal to the number of disagreeing edges that configuration $\s$ has, since
\begin{equation}
\label{eq:Udisagreeingedges}
	\DH(\s)=H(\s) + |E| = |E| - \sum_{(v,w) \in E} \mathds{1}_{\{\s(v) =\s(w)\}} = \sum_{(v,w) \in E} \mathds{1}_{\{\s(v) \neq \s(w)\}}.
\end{equation}
Note that the energy gap $\DH(\s)$ corresponds to the total perimeter of the same-color clusters that configuration $\s$ has. Indeed, the sides of the squares used to represent Potts configuration, see e.g.~Figures~\ref{fig:colorconventions} and~\ref{fig:colorconventionsising}, are precisely the edges of the dual graph of $\L$ used to define the cluster contours. Hence, the energy gap $\DH(\s)$ quantifies the \textit{surface tension} between the clusters of different colors that configuration $\s$ has. Indeed, the disagreeing edges of a Potts configuration $\s$ on $\L$ are in one-to-one correspondence with the edges of the dual graph $\L+\left (\frac{1}{2},\frac{1}{2} \right )$ that define the Peierls contours of configuration $\s$.

We will now use in a crucial way the structure of the grid graph $\L$ to rewrite the energy gap. The edges of a grid $\L$ can have either vertical or horizontal orientation, and we can partition the edge set $E$ accordingly. More precisely, we consider the two subsets of vertical edges $E_v$ and horizontal edges $E_h$, which are such that $E = E_h \cup E_v$ and $E_h \cap E_v = \emptyset$. In view of this partition of the edge set $E$ and of the structure of the Potts energy function $H$ defined in~\eqref{eq:energyfunction_Potts}, we can rewrite the energy gap of a configuration $\s \in \cX$ as the sum of two contributions of horizontal and vertical edges, namely
\begin{equation}
\label{eq:DHhv}
	\DH(\s)=\sum_{(v,w) \in E_v} \mathds{1}_{\{\s(v) \neq \s(w)\}} + \sum_{(v,w) \in E_h} \mathds{1}_{\{\s(v) \neq \s(w)\}}.
\end{equation}
This identity is essentially saying that the total length of the Peierls contours of a given configuration can be seen as the sum of two terms, the total number of vertical segments and the total number of horizontal segments the contours consist of. In the rest of the paper, it will be convenient to have the following notation. Let $\DH_{r_i}(\s)$ be the energy gap of a configuration $\s \in \cX$ in the $i$-th row, namely
\begin{equation}\label{eq:Uri}
	\DH_{r_i}(\s) := \sum_{(v,w) \in r_i} \mathds{1}_{\{\s(v) \neq \s(w)\}}.
\end{equation}
Similarly, we define $\DH_{c_j}(\s)$ as the energy gap of a configuration $\s \in \cX$ in the $j$-th column, \ie
\begin{equation}\label{eq:Ucj}
	\DH_{c_j}(\s) := \sum_{(v,w) \in c_j} \mathds{1}_{\{\s(v) \neq \s(w)\}}.
\end{equation}
Note that the energy gap of a configuration $\s \in \cX$ on the horizontal (vertical) edges respectively can be rewritten as the sum of the energy gaps on each row (respectively, column), \ie
\begin{equation}\label{eq:UhUv}
	 \sum_{(v,w) \in E_h} \mathds{1}_{\{\s(v) \neq \s(w)\}} = \sum_{i=0}^{K-1} \DH_{r_i}(\s) \quad \text{ and } \quad  \sum_{(v,w) \in E_v} \mathds{1}_{\{\s(v) \neq \s(w)\}} = \sum_{j=0}^{L-1} \DH_{c_j}(\s).
\end{equation}

Given a Potts configuration $\s \in \cX$ on $\L$, a vertex $v \in \L$, and a color $k \in \{1,\dots,q\}$, we define $\s^{v,k} \in \cX$ to be the configuration obtained from $\s$ by coloring the vertex $v$ with color $k$, i.e.
\begin{equation}\label{eq:svk}
	\s^{v,k}(w):=
	\begin{cases}
		\s(w) 	& \text{ if } w\neq v,\\
		k			& \text{ if } w=v.
	\end{cases}
\end{equation}

\subsection{Local geometric properties: Bridges and crosses}
\label{sub22}
In this subsection we will introduce some geometric features of Potts configurations on a $K \times L$ grid graph $\L$ and study how they are related with their corresponding energy.

We say that a configuration $\s \in \cX$ has a \textit{horizontal bridge} on a row if all the vertices on that row have the same color. \textit{Vertical bridges} are defined analogously. A few examples of bridges are illustrated in Figure~\ref{fig:bridgescross}(a) and (b). The next lemma is an immediate consequence of the structure of rows and columns on $\L$.
\begin{lem}
\label{lem:monochromaticcross}
A Potts configuration on $\L$ cannot display simultaneously a horizontal bridge and a vertical bridge of different colors.
\end{lem}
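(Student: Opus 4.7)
The plan is to argue by contradiction using the fact that every row and every column of $\L$ share exactly one vertex (the grid is a product of the row-index set and the column-index set, so any row $r_i$ and any column $c_j$ intersect in the single vertex with coordinates $(j,i)$, regardless of whether the boundary conditions are open or periodic).

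Concretely, I would assume that some configuration $\s \in \cX$ simultaneously has a horizontal bridge on a row $r_i$ of color $a \in S$ and a vertical bridge on a column $c_j$ of color $b \in S$. By the definition of horizontal bridge, every vertex of $r_i$ has color $a$ under $\s$; by the definition of vertical bridge, every vertex of $c_j$ has color $b$ under $\s$. Applying both conclusions to the unique vertex $v = (j,i) \in r_i \cap c_j$ would force $\s(v) = a$ and $\s(v) = b$, whence $a = b$. This contradicts the assumption that the two bridges have different colors, completing the argument.

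There is essentially no obstacle: the whole content is the elementary observation that in a grid $\L$, any row and any column meet in exactly one vertex, and a spin assignment is a function, so a single vertex can carry only one color. The same proof works verbatim for both periodic and open boundary conditions, since the intersection property of rows and columns depends only on the vertex set $V = \{0,\dots,L-1\} \times \{0,\dots,K-1\}$, not on the edge set $E$.
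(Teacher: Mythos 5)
Your proof is correct and matches the intent of the paper exactly: the paper dismisses this lemma as ``an immediate consequence of the structure of rows and columns on $\L$'' without spelling out the argument, and the structure it has in mind is precisely the one you identify, namely that any row $r_i$ and any column $c_j$ meet in the single vertex $(j,i)$, which can carry only one spin value. Your remark that the argument is independent of the boundary conditions (depending only on the vertex set, not the edge set) is a correct and worthwhile clarification.
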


A configuration $\s \in \cX$ is said to have a \textit{cross} when it has both a vertical and horizontal bridges. Note that, in view of Lemma~\ref{lem:monochromaticcross}, these two bridges cannot be of different colors. Figure~\ref{fig:bridgescross}(c) shows an example of a cross. If the specific color $k \in \{1,\dots,q\}$ of bridges (crosses) is relevant, we will refer to them as \textit{$k$-bridges} (\textit{$k$-crosses}) or specify their color.

\begin{figure}[!h]
	\centering
	\subfloat[A horizontal black bridge]{\includegraphics{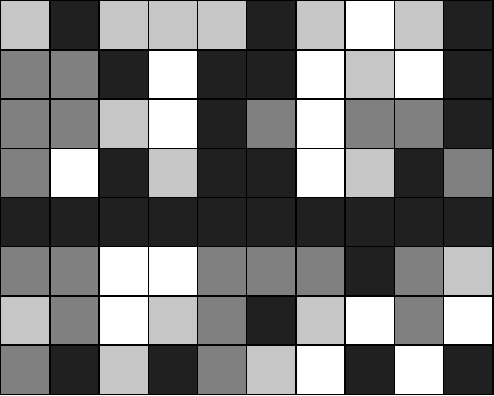}}
	\hspace{0.6cm}
	\subfloat[Two vertical bridges, one black and one white]{\includegraphics{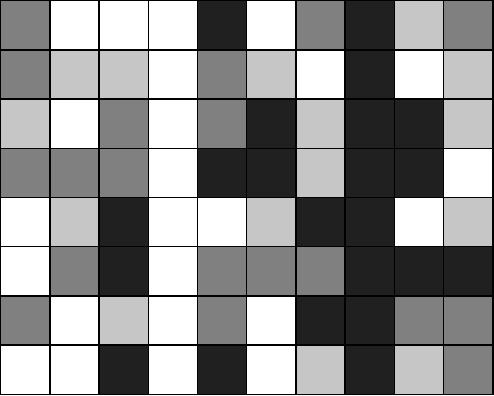}}
	\hspace{0.6cm}
	\subfloat[A black cross]{\includegraphics{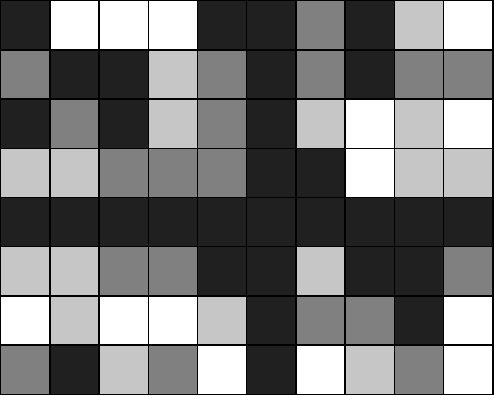}}
	\caption{Example of configurations on a $8 \times 10$ grid graph displaying black bridges or a black cross}
	\label{fig:bridgescross}
\end{figure}

\FloatBarrier

\begin{lem}[Bridges and zero energy gap rows/columns]
\label{lem:zerowastage}
The following properties hold for every Potts configuration $\s \in \cX$ on a grid graph $\L$:
\begin{itemize}
	\item[\textup{(a)}] $\DH_{r}(\s) = 0$ if and only if $\s$ has a horizontal bridge on row $r$;
	\item[\textup{(b)}] $\DH_{c}(\s) = 0$ if and only if $\s$ has a vertical bridge on column $c$.
\end{itemize}
Furthermore, if $\L$ is a grid graph with periodic boundary conditions, then
\begin{itemize}
	\item[\textup{(c)}] If $\s$ has no horizontal bridge on row $r$, then $\DH_{r}(\s)\geq 2$;
	\item[\textup{(d)}] If $\s$ has no vertical bridge on column $c$, then $\DH_{c}(\s) \geq 2$.
\end{itemize}
\end{lem}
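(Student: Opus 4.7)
The plan is to treat each row and each column of $\L$ as a one-dimensional subgraph and count color changes along it, exploiting the fact that the topology of this subgraph depends on the boundary conditions: a row (or column) is a path under open boundary conditions and a cycle under periodic ones.

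For parts (a) and (b), I would fix a row $r$ (respectively a column $c$) and observe that the subgraph of $\L$ induced by $r$ is connected in either boundary regime. By~\eqref{eq:Uri}, the condition $\DH_{r}(\s)=0$ is equivalent to $\s(v)=\s(w)$ for every edge $(v,w)\in r$, which by connectedness forces $\s$ to be constant on the vertex set of $r$; this is precisely the definition of a horizontal bridge on $r$. The converse implication is immediate from the same identity, and the argument for columns via~\eqref{eq:Ucj} is entirely analogous.

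For parts (c) and (d), I would assume periodic boundary conditions, so that row $r$ induces a cycle of length $L$, say $v_0, v_1, \dots, v_{L-1}, v_0$. The key observation is a simple return argument: walking once around the cycle, every disagreeing edge we cross records a change of color, and after $L$ steps we must be back at $v_0$ with the original color $\s(v_0)$. If only one edge of the cycle were disagreeing, the walk would register exactly one color change and end with a color different from $\s(v_0)$, a contradiction. Combined with part (a), which already rules out $\DH_{r}(\s)=0$ whenever no bridge exists, this yields $\DH_{r}(\s)\geq 2$. Columns are handled by applying the same cycle argument along a vertical loop of length $K$.

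The statement is a routine combinatorial observation and no step poses a real obstacle. The only point that warrants care is that the strict bound in (c) and (d) genuinely requires periodic boundary conditions: under open boundary conditions a row is a path rather than a cycle, so there is no return constraint forcing a second color change and a configuration can perfectly well have $\DH_{r}(\s)=1$ while carrying no horizontal bridge on $r$.
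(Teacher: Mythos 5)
Your proposal is correct and matches the paper's proof in all essentials: parts (a) and (b) are read off from the definitions plus connectedness of the row/column subgraph, and parts (c) and (d) follow from ruling out $\DH_{r}(\s)=1$ by a contradiction argument on the cycle formed by the row under periodic boundary conditions. Your ``return around the cycle'' phrasing is just a slightly different way of stating the paper's observation that if only one edge disagrees, the remaining $L-1$ edges form a monochromatic path forcing the two endpoints of the disagreeing edge to share a color.
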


Lemma~\ref{lem:zerowastage}(c) and (d) states that in the case of periodic boundary conditions if a configuration has no bridge on a given row/column, the surface tension on that row/column is at least $2$. 

\begin{proof}
Statements (a) and (b) are an immediate consequence of~\eqref{eq:Uri} and~\eqref{eq:Ucj}: indeed the energy gap on a row/column can be seen as the total number of disagreements on that row/column, which is equal to zero for horizontal/vertical bridges. 

We will prove only property (c), since the proof of (d) is analogous after interchanging the roles of rows and columns. Consider a $K \times L$ grid $\L$ with periodic boundary conditions. It follows from (a) that if $\s$ has no horizontal bridge on row $r$, then $\DH_r(\s) \geq 1$ and therefore, to prove statement (c), it is enough to show that $\DH_r(\s) \neq 1$. Suppose by contradiction that there exists a configuration $\s \in \cX$ and a row $r$ of $\L$ such that $\DH_r(\s)=1$. Let $v$ and $w$ the only two neighboring vertices such that $\s(v)\neq \s(w)$ and let $e=(v,w)$ be the edge that links them. Since $\DH_r(\s)=1$, $e$ is the unique disagreeing edge on row $r$, which means that the remaining $L-1$ edges create a path from $v$ to $w$ where all the comprised vertices must be of the same color and thus, in particular, $\s(v)=\s(w)$, which is a contradiction.
\end{proof}

\subsection{Expansion algorithm}
\label{sub23}
In this subsection we introduce the \textit{expansion algorithm}, a procedure that can be used to create a path consisting of single-site updates from any suitable initial Potts configuration on $\L$ to one of the stable configurations. This expansion algorithm is presented in Proposition~\ref{prop:expansion} below and will be used twice: first to construct a reference path $\o^*$ between any pair of stable configurations with a prescribed height $\Phi_{\o^*}$ (Proposition~\ref{prop:refpath}) and later to show that every Potts configuration on $\L$ can be \textit{reduced} to a stable configurations with a maximum energy gap strictly smaller than $\G(\L)$, proving Theorem~\ref{thm:chadw}(ii).

A Potts configuration $\s \in \cX$ on $\L$ is a suitable initial configuration for the expansion algorithm if there exists a monochromatic bridge in $\s$, which in view of Lemma~\ref{lem:zerowastage} is equivalent to require that there exists either a column $c$ (or a row $r$) of $\L$ with $\DH_c(\s)=0$ ($\DH_r(\s)=0$, respectively). Figure~\ref{fig:suitableconfigurations} shows a few examples of suitable starting configurations. The procedure will then gradually ``expand'' this monochromatic bridge by changing the color of the vertices in the adjacent columns until the corresponding stable configuration is obtained: this is the reason why we choose to name it \textit{expansion algorithm}.

We remark that the fact that our algorithm makes a cluster grow gradually column by column (or row by row) is not crucial, and in fact we could have defined a more general expansion algorithm that leverages the vertex-isoperimetric order for grid graphs, which is known both for periodic and open boundary conditions~\cite{AC95,Bollobas1990,R98,Wang1977a}. We choose to present here a procedure based on the row and column structure of $\L$ as it is more intuitive and eventually yields the same energy bounds. 

\begin{figure}[!h]
	\centering
	\subfloat[Configurations with a monochromatic (black) bridge on column $c_0$]{\includegraphics{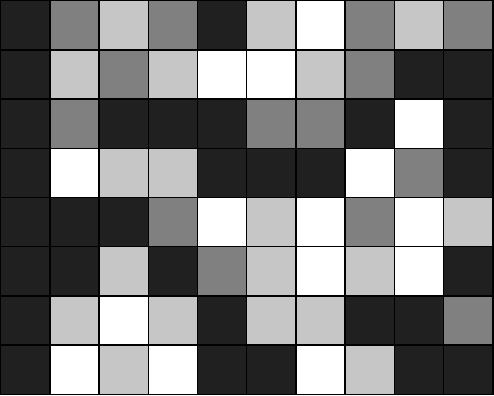}}
	\hspace{0.6cm}
	\subfloat[Configuration with a monochromatic (gray) bridge on column $c_7$]{\includegraphics{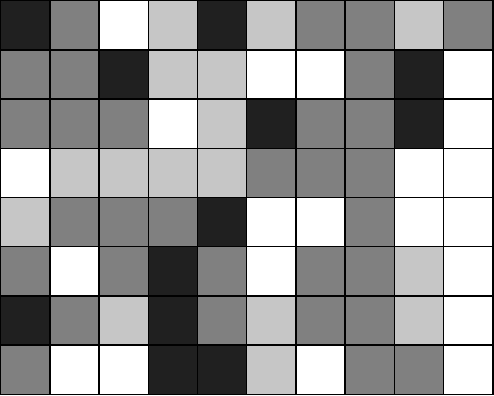}}
	\hspace{0.6cm}
	\subfloat[Configuration with a monochromatic (white) bridge on column $c_4$]{\includegraphics{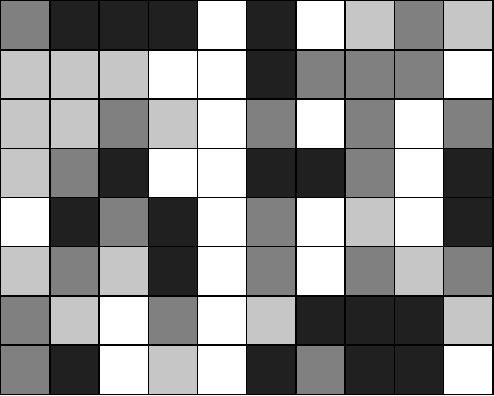}}
	\caption{Examples of suitable starting configurations for the expansion algorithm on a $8 \times 10$ grid}
	\label{fig:suitableconfigurations}
\end{figure}
\FloatBarrier

The following proposition summarizes our findings for both types of boundary conditions.
\begin{prop}[Expansion algorithm for grid graphs]
\label{prop:expansion}
Let $\s \in \cX$ be a Potts configuration on a grid graph $\L$. If $\s$ has a monochromatic $k$-bridge, then there exists a path $\o: \s \to \cc_k$ such that 
\[	
	\Phi_\o -H(\s) \leq 
	\begin{cases}
		2 & \text{ if } \L\text{ has periodic boundary conditions,}\\
		1 & \text{ if } \L\text{ has open boundary conditions.}
	\end{cases}
\]
\end{prop}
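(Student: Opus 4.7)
The plan is to construct an explicit path $\o:\s\to\ck$ by growing the monochromatic $k$-bridge of $\s$ one column (or row) at a time, and to bound its height via a local single-flip estimate combined with a global energy comparison.

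Assume without loss of generality that the bridge of $\s$ is vertical on some column $c_{j^*}$ (the horizontal case is symmetric after interchanging the roles of rows and columns). I would define an \emph{elementary expansion step}: starting from any configuration $\s'$ that has a $k$-bridge on a column $c_{j'}$, produce a path of single-spin updates that ends at the configuration $\s''$ obtained from $\s'$ by recoloring every vertex of $c_{j'+1}$ to $k$. Concretely, process the vertices $v_0,\dots,v_{K-1}$ of $c_{j'+1}$ from top to bottom: if the current color of $v_i$ is already $k$, do nothing; otherwise flip $v_i$ to $k$. The terminal configuration $\s''$ has a $k$-bridge on both $c_{j'}$ and $c_{j'+1}$, so the step can be iterated.

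The core technical computation is the per-flip bound on $\DH$. When we flip $v_i\in c_{j'+1}$ from some color $a\ne k$ to $k$, the left neighbor (in $c_{j'}$) has color $k$ and contributes $-1$ to $\DH$; the right neighbor (in $c_{j'+2}$, when it exists) contributes at most $+1$; the top neighbor for $i\ge 1$ is $v_{i-1}$, which by the processing order has color $k$ at this stage and contributes $-1$; and symmetrically the bottom neighbor for $i\le K-2$ contributes at most $+1$. The extremal indices $i=0$ and $i=K-1$ require separate accounting. In the open-BC case the absent boundary neighbor contributes $0$, yielding $\DH\le 1$ for $i=0$, $\DH\le 0$ for intermediate $i$, and $\DH\le -1$ for $i=K-1$. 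In the periodic-BC case the wrap-around neighbor exists and contributes at most $+1$ while unprocessed and exactly $-1$ once it equals $k$, yielding $\DH\le 2$, $\DH\le 0$, and $\DH\le -2$ respectively. Consequently the energy along one elementary step never exceeds $H(\s')+1$ (open) or $H(\s')+2$ (periodic).

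A separate global comparison yields moreover $H(\s'')\le H(\s')$: only edges within $c_{j'+1}$, between $c_{j'}$ and $c_{j'+1}$, and between $c_{j'+1}$ and $c_{j'+2}$ are affected, and a row-by-row check (using that for each $i$ the quantities $\mathds{1}_{\{\s'(v_i)\ne k\}}+\mathds{1}_{\{\s'(v_i)\ne \s'(w_i)\}}\ge \mathds{1}_{\{\s'(w_i)\ne k\}}$ dominate the newly created right-face disagreements by those eliminated on the left face) shows that the total net change in disagreeing edges is non-positive. Iterating the elementary step $L-1$ times (cyclically for periodic boundary conditions, and first rightward then leftward from $c_{j^*}$ for open boundary conditions) produces the desired path $\o$ terminating at $\ck$. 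Since each elementary step ends at energy no larger than it started, a simple induction on the number of expansions combined with the per-step height bound yields $\Phi_\o-H(\s)\le 1$ or $\le 2$ according to the boundary conditions, as required. The main place requiring care is the boundary book-keeping at $i=0$ and $i=K-1$ together with the row-by-row comparison giving $H(\s'')\le H(\s')$; once those are in hand, the induction over columns is routine.
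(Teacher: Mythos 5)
Your proof is correct and takes essentially the same approach as the paper: grow the bridge column by column, bounding the energy change of each single-spin flip by examining the incident edges, and noting that each column expansion ends at energy no higher than it started so the bounds telescope. The only cosmetic differences are the processing order within a column (you go top-to-bottom, the paper bottom-to-top) and your extra ``row-by-row'' global check of $H(\s'')\le H(\s')$, which is actually already implied by summing your per-flip bounds ($1+0\cdot(K-2)-1=0$ open, $2+0\cdot(K-2)-2=0$ periodic) and is thus redundant but harmless.
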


\begin{proof}
We will first describe the procedure in the case where the grid $\L$ has periodic boundary conditions, and then later show how it can be adapted for open boundary conditions.

Consider a suitable starting configuration $\s \in \cX$ for the expansion algorithm. Modulo a relabeling of the columns, we can assume that $\s$ has the monochromatic $k$-bridge on the first column $c_0$. The procedure in the case where the starting configuration $\s$ has an horizontal bridge can be easily obtained by interchanging the role of rows and columns. In what follows we associate the color black to the spin value $k$. 

We now describe an iterative procedure that builds a path $\o$ in $\cX$ from $\s$ to $\cc_k$. The path $\o$ is the concatenation of $L$ paths $\o^{(1)},\dots,\o^{(L)}$. For every $i$ along path $\o^{(i)}$ the vertices on $i$-th column are progressively colored in black. Define the \textit{intermediate configurations} $\s_{i}$, $i=0,\dots,L$, which will be the starting and ending points of the paths $\o^{(1)},\dots,\o^{(L)}$, as
\begin{equation}
\label{eq:intermediateconf}
	\s_i(v):=
	\begin{cases}
		k & \text{ if } \displaystyle v \in \bigcup_{j=0}^i c_j,\\
		\s(v) & \text{ if } \displaystyle v \in V \setminus \bigcup_{j=0}^i c_j.
	\end{cases}
\end{equation}
Clearly $\s_0 = \s$ and $\s_{L-1} = \cc_k$. For every $i=1,\dots,L$ we will now define a path $\o^{(i)}: \s_{i-1} \to \s_{i}$ of length $K$ in the following way. To help the reader following the construction we illustrate in Figure~\ref{fig:patho1} some configurations along the path $\o^{(1)}$ on a $8 \times 10$ grid.

\begin{figure}[!h]
	\centering
	\subfloat[$\o^{(1)}_{0}=\s_0$]{\includegraphics{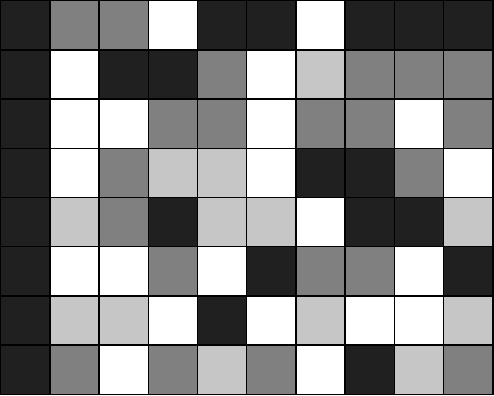}}
	\hspace{0.55cm}
	\subfloat[$\o^{(1)}_{1}$]{\includegraphics{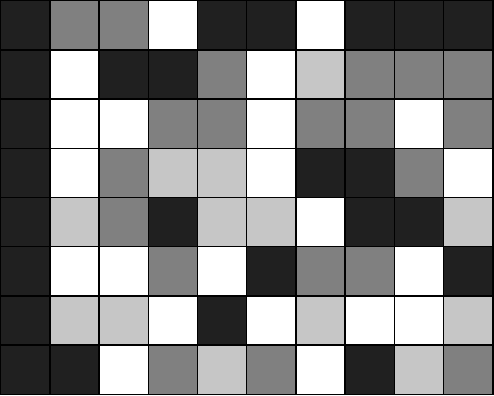}}
	\hspace{0.55cm}
	\subfloat[$\o^{(1)}_{4}$]{\includegraphics{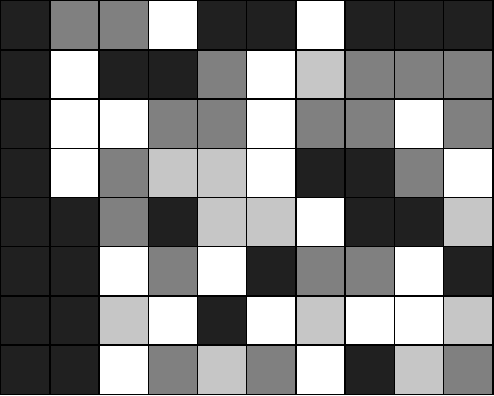}}
	\\
	\subfloat[$\o^{(1)}_{5}$]{\includegraphics{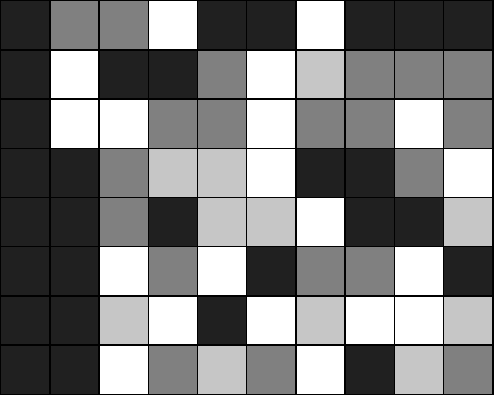}}
	\hspace{0.55cm}
	\subfloat[$\o^{(1)}_{7}$]{\includegraphics{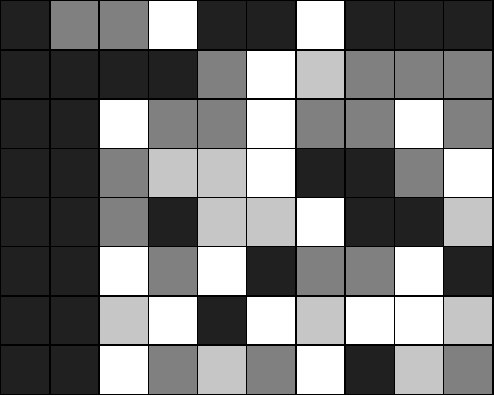}}
	\hspace{0.55cm}
	\subfloat[$\o^{(1)}_{8}=\s_1$]{\includegraphics{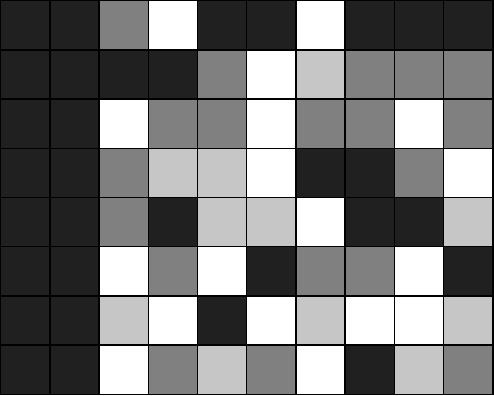}}
	\caption{Illustration of some configurations along the path $\o^{(1)}: \s_0 \to \s_1$ on a $8 \times 10$ grid}
	\label{fig:patho1}
\end{figure}
\FloatBarrier

Set $\o^{(i)}_0=\s_{i-1}$ and for any $m=1,\dots,K$ define the configuration $\o^{(i)}_{m}$ from the previous one by coloring the vertex $(i,m-1)$ as black; in other words, using the notation introduced in~\eqref{eq:svk},
\[
	\o^{(i)}_{m} := (\o^{(i)}_{m-1})^{(i,m-1),k}, \quad m=1,\dots, K.
\]
We claim that the ``energy cost'' of this single-vertex update satisfies the following inequalities:
\begin{equation}
\label{eq:energycost}
	H(\o^{(i)}_{m})-H(\o^{(i)}_{m-1}) \leq 
	\begin{cases}
		2 & \text{ if } m=1,\\
		0 & \text{ if } 1 < m < K,\\
		-2 & \text{ if } m=K.\\
	\end{cases}
\end{equation}
Note that by updating a Potts configuration on $\L$ in a single vertex $v=(i,m-1) \in \L$, the edges that can change their status (from agreeing to disagreeing and vice-versa) are only those incident to $v$. Given $\h \in \cX$ and $v \in \L$, denote by $d_v(\h)$ the number of disagreeing edges incident to vertex $v$ in configuration $\h$, \ie
\[
	d_v(\h) := \sum_{w \in \L \, : \, (v,w) \in E} \mathds{1}_{\{\h(v) \neq \h(w)\}},
\]
and rewrite the energy gap between two consecutive configurations along the path $\o^{(i)}$ as
\begin{equation}
\label{eq:disagreeing}
	H(\o^{(i)}_{m})-H(\o^{(i)}_{m-1}) = d_v(\o^{(i)}_{m}) - d_v(\o^{(i)}_{m-1}).
\end{equation}
If the considered vertex $v=(i,m-1)$ is already black in the starting configuration $\s$, the step is void and trivially $H(\o^{(i)}_{m})-H(\o^{(i)}_{m-1})=0$. Assume then that the vertex $v=(i,m-1)$ is not black, \ie $\s(v)\neq k$. Using identity~\eqref{eq:disagreeing}, the claim in~\eqref{eq:energycost} can be proved case by case. The three different cases are illustrated below in Figure~\ref{fig:patho1}, respectively in (a)-(b) for $m=1$, (c)-(d) for $1<m<K$, and (e)-(f) for $m=K$.

\begin{itemize}
	\item If $m=1$, then
	$
		d_v(\o^{(i)}_{m-1}) \geq 1,
	$
	since $v$ is not black and as such it disagrees at least with its left neighbor $(i-1,0)$ (that is black by construction), and
	$
		d_v(\o^{(i)}_{m}) \leq 3,
	$
	since $d_v(\o^{(i)}_{m})\neq 4$ in view of the fact that at least the left neighbor $(i-1,0)$ of $v$ is of the same color (\ie black).
	\item If $1 < m < K$, then
	$
		d_v(\o^{(i)}_{m-1}) \geq 2,
	$
	since $v$ is not black and as such it has a color disagreeing at least with its left neighbor $(i-1,m-1)$ and bottom neighbor $(i,m-2)$ (that are black by construction), and
	$
		d_v(\o^{(i)}_{m}) \leq 2,
	$
	since $v$ is black and agrees at least with its left neighbor $(i-1,m-1)$ and bottom neighbor $(i,m-2)$, that are both black by construction.
	\item If $m=K$, then
	$
		d_v(\o^{(i)}_{m-1}) \geq 3,
	$
	since $v$ is not black and as such it disagrees at least with its left, top, and bottom neighbors (that are black by construction), and
	$
		d_v(\o^{(i)}_{m}) \leq 1,
	$
	since $v$ is black and agrees at least with its left, top, and bottom neighbors, that all three black by construction.
\end{itemize}

For every $i=1,\dots,L-1$, the inequalities~\eqref{eq:energycost} for the energy differences along each path $\o^{(i)}$ imply that $	\Phi_{\o^{(i)}} - H(\s_{i-1}) \leq 2$. Therefore, by concatenating all the paths $\o^{(1)}, \dots, \o^{(L-1)}$ we obtain a path $\o: \s \to \cc_k$ such that
$
	\Phi_\o -H(\s) \leq 2.
$\\

We now describe how the expansion algorithm works when $\L$ has open boundary conditions. Instead of giving a full description of the procedure, we will only briefly explain the main differences from the one we just described for periodic boundary conditions.

Consider a suitable starting configuration $\s \in \cX$ to start the expansion algorithm, \ie a configuration displaying a black bridge. 
As before, it is enough to describe the procedure in the case of a vertical bridge. There are two tweaks necessaries to adapt the algorithm described earlier to this scenario:
\begin{enumerate}
	\item The columns of a grid graph with open boundary conditions are not identical and thus, differently from what we did earlier, we cannot assume without loss of generality that the monochromatic bridge lies on the first column $c_0$. Let $c^*$ be the column where the monochromatic bridge lies in configuration $\s$. The procedure described previously can be used to expand the monochromatic bridge first to the right of $c^*$, until the open boundary of $\L$ is reached, and then to the left of $c^*$ (``mirroring'' the moves described earlier) until the left open boundary of $\L$.
	\item Every new column is started by updating its bottom-most vertex, which in an grid graph with open boundaries has at most $3$ neighbors, and is completed by updating the topmost vertex, which also has at most $3$ neighbors in this case. By revisiting the previous energy costs calculations, we can derive that along any path $\o^{(i)}$ that adds a black column next to an existing one 
\[
	H(\o^{(i)}_{m})-H(\o^{(i)}_{m-1}) \leq 
	\begin{cases}
		1 & \text{ if } m=1,\\
		0 & \text{ if } 1 < m < K,\\
		-1 & \text{ if } m=K.
	\end{cases}
\]
Therefore,
$
	H(\s_{i}) \leq H(\s_{i-1}) $ and $\Phi_{\o^{(i)}} - H(\s_{i-1}) \leq 1
$
and the path $\o$ obtained by concatenating $\o^{(1)},\dots,\o^{(L)}$ then satisfies the inequality $\Phi_\o -H(\s) \leq 1.$ \qedhere
\end{enumerate}
\end{proof}

\subsection{Reference path between stable configurations}
\label{sub24}
We will now use the expansion algorithm to build a reference path between any pair of stable configurations with a prescribed height.
\begin{prop}[Reference path]
\label{prop:refpath}
Consider the Potts model on a $K \times L$ grid $\L$. For every pair of stable configurations $\cc,\dd \in \ss$, $\cc \neq \dd$, there exists a reference path $\o^*: \cc \to \dd$ such that
\[
	 \Phi_{\o^*} -H(\cc) =  
	 \begin{cases}
	 	2 \min\{K,L\}+2 & \text{ if } \L \text{ is a grid with periodic boundary conditions,}\\
		 \min\{K,L\}+1  & \text{ if } \L \text{ is a grid with open boundary conditions.}
	 \end{cases}
\]
\end{prop}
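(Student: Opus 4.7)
The plan is to construct $\o^*$ in two stages. First, starting from $\cc$, carve out a monochromatic $k_2$-bridge along the shorter side of $\L$; then apply the expansion algorithm of Proposition~\ref{prop:expansion} to spread the color $k_2$ over the remainder of the grid. Without loss of generality assume $K \le L$ and write $\cc = \cc_{k_1}$, $\dd = \cc_{k_2}$, so that columns have length $\min\{K,L\} = K$ and are the shorter linear substructures of $\L$. In the open-boundary case it is essential to place the bridge on a boundary column (say $c_0$), in order to exploit the reduced degree of boundary vertices.

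For Stage~1, I recolor the vertices of column $c_0$ from $k_1$ to $k_2$ one at a time in the order $(0,0), (0,1), \dots, (0,K-1)$, tracking the change in the number of disagreeing edges incident to the updated vertex. Let $\eta_0 := \cc$ and let $\eta_m$ be the configuration after the first $m$ flips, so that $\eta_K$ is exactly the configuration with a monochromatic $k_2$-bridge on $c_0$ and all other vertices still colored $k_1$. In the periodic case the first flip at $(0,0)$ contributes $+4$ (all four neighbors are $k_1$), each interior flip at $(0,m)$ with $1 \le m \le K-2$ contributes $+2$ (one agreement gained with the vertex below, one new disagreement with the vertex above, and two new horizontal disagreements), and the final flip at $(0,K-1)$ contributes $0$ because the wrap-around makes both of its vertical neighbors already $k_2$. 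Hence $\DH(\eta_m) = 2m+2$ for $1 \le m \le K-1$, $\DH(\eta_K) = 2K$, and the peak of Stage~1 equals $2K$. In the open case the analogous tally, using the reduced degree of boundary vertices, yields $\DH(\eta_K) = K$ and a Stage~1 peak of $K$.

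For Stage~2, since $\eta_K$ has a monochromatic $k_2$-bridge, Proposition~\ref{prop:expansion} furnishes a path $\o' : \eta_K \to \dd$ with $\Phi_{\o'} - H(\eta_K) \le 2$ in the periodic case (respectively $\le 1$ in the open case). Concatenating the Stage~1 path with $\o'$ produces the reference path $\o^*$, and
\[
	\Phi_{\o^*} - H(\cc) \;\le\; \max\bigl\{\, 2K,\ 2K+2 \,\bigr\} \;=\; 2K+2
\]
in the periodic case (respectively $\le K+1$ in the open case), which matches $\G(\L)$ since $K = \min\{K,L\}$. To upgrade this upper bound to the claimed equality I would verify that the Stage~2 peak is actually attained by the very first flip of $\o'$: the bottom vertex of the column immediately adjacent to $c_0$ has exactly one $k_2$-colored neighbor (its counterpart in the bridge) and three (respectively two) $k_1$-colored neighbors both before and after the flip, so that single update raises the energy by exactly $+2$ (respectively $+1$), pushing the height to exactly $2K+2$ (respectively $K+1$).

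The main obstacle is the careful case analysis of the energy differences in Stage~1: the first flip, the generic interior flips, and the final flip each involve a different neighborhood pattern, and one must also verify that $\DH(\eta_m)$ never overshoots the target $\G(\L)$ at any intermediate step. Once this accounting is settled, Stage~2 is an immediate application of Proposition~\ref{prop:expansion} and the concatenation argument is transparent.
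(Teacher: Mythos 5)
Your construction is correct and follows essentially the same two-stage route as the paper: Stage~1 flips the vertices of column $c_0$ one at a time to create a monochromatic $k_2$-bridge (yielding a peak of exactly $2K$ in the periodic case and $K$ in the open case), and Stage~2 invokes Proposition~\ref{prop:expansion} to flood the rest of the grid, after which you verify that the first step of the expansion attains the claimed height exactly, turning the $\le$ into $=$. The bookkeeping of the per-flip energy increments ($+4, +2, \dots, +2, 0$ periodic; $+2, +1, \dots, +1, 0$ open) and the observation that the bridge must sit on a boundary column in the open case both match the paper's argument.
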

\begin{proof}
We first prove the result in the case of periodic boundary conditions and assuming $K \leq L$. When $K > L$, the construction of the reference path is similar and can be obtained by interchanging the role of rows and columns. The proof of the result when $\L$ has open boundary conditions is discussed later.

Let $\s^*$ be the configuration that agrees with the target configuration $\dd$ on the first column $c_0$ and elsewhere with the starting configuration $\cc$, \ie
\begin{equation}
\label{eq:sstar}
	\s^*(v):=
	\begin{cases}
		\dd(v) & \text{ if } v \in c_0,\\
		\cc(v) & \text{ otherwise.}
	\end{cases}
\end{equation}
We will construct a reference path $\o^*$ from $\cc$ to $\dd$ such that
\[
	\Phi_{\o^*} -H(\cc) =  2K+2
\]
as the concatenation of two paths, $\o^{(1)}: \cc \to \s^*$ and $\o^{(2)}: \s^* \to \dd$ such that
\[
	\Phi_{\o^{(1)}} = H(\cc) + 2 K \quad \text{ and } \quad \Phi_{\o^{(2)}} = H(\cc) + 2 K + 2.
\]
For simplicity we color to the vertices whose spins agree with $\cc$ as white and the one agreeing with $\dd$ as black. Figure~\ref{fig:cctodd} should help the reader following the construction of the reference path.

The path $\o^{(1)}$ is the path $(\o^{(1)}_0,\dots,\o^{(1)}_K)$ of length $K$ starting from $\o^{(1)}_0=\cc$ and obtained iteratively by coloring at step $i$ vertex $(0,i-1)$ as black. It is easy to check that
\begin{equation}
\label{eq:DHo1}
	H(\o^{(1)}_i) - H(\o^{(1)}_{i-1}) =
	\begin{cases}
		4 & \text{ if } i=1,\\
		2 & \text{ if } i=2,\dots,K-1,\\
		0 & \text{ if } i=K.
	\end{cases}
\end{equation}

\captionsetup[subfigure]{labelformat=empty}
\begin{figure}[!t]
\centering
	\subfloat[$\cc$]{\includegraphics{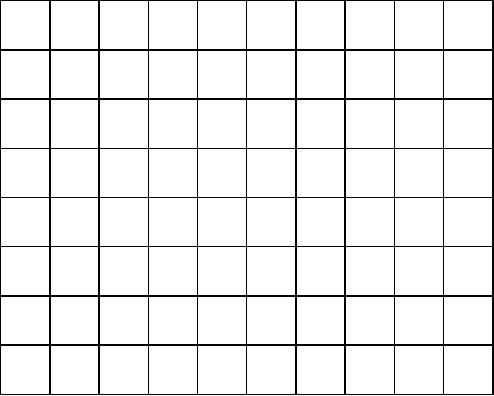}}
	\hspace{0.5cm}
	\subfloat[$\o^{(1)}_1$]{\includegraphics{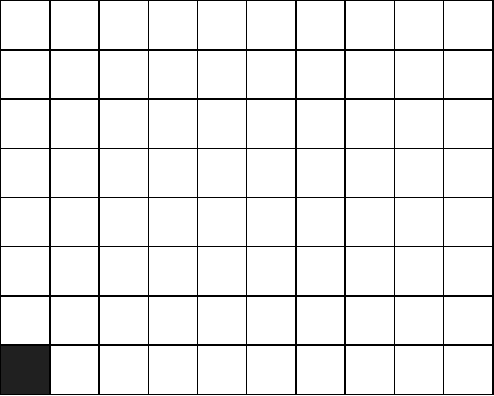}}
	\hspace{0.5cm}
	\subfloat[$\o^{(1)}_2$]{\includegraphics{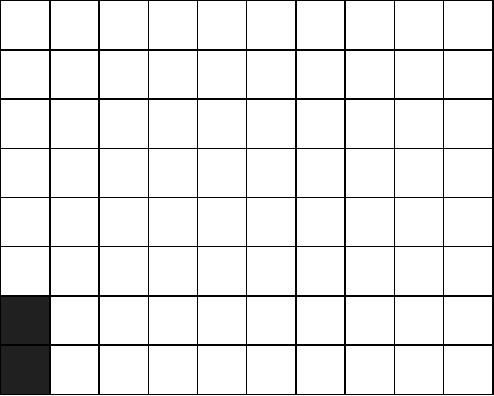}}
	\\
	\subfloat[$\o^{(1)}_{K-1}$]{\includegraphics{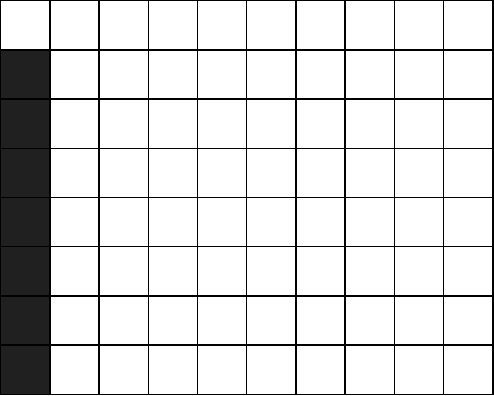}}
	\hspace{0.5cm}
	\subfloat[$\s^*=\o^{(1)}_K$]{\includegraphics{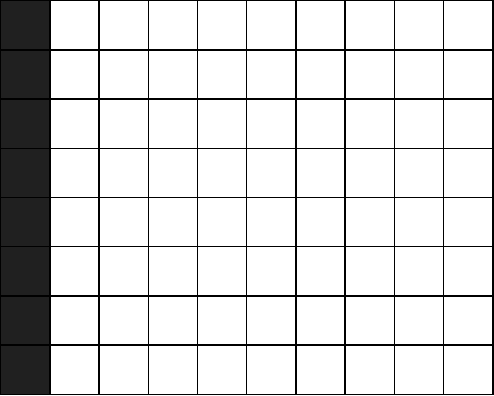}}
	\hspace{0.5cm}
	\subfloat[$\o^{(2)}_1$]{\includegraphics{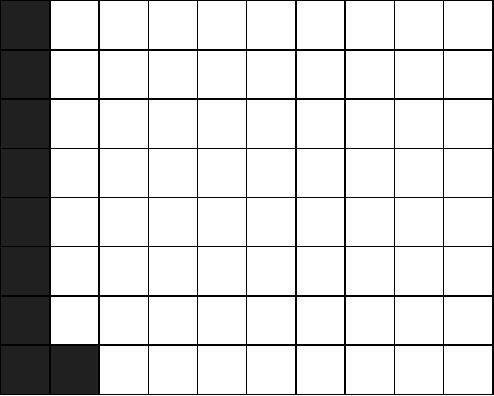}}
	\\
	\subfloat[$\o^{(2)}_2$]{\includegraphics{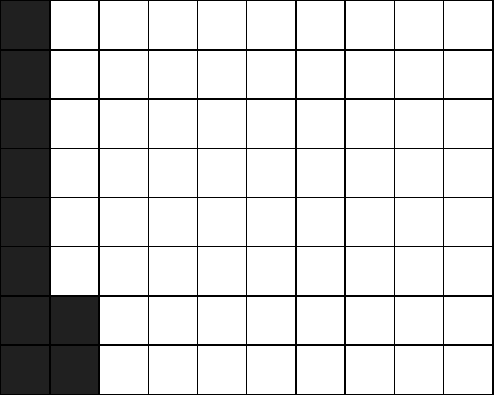}}
	\hspace{0.5cm}
	\subfloat[$\o^{(2)}_K$]{\includegraphics{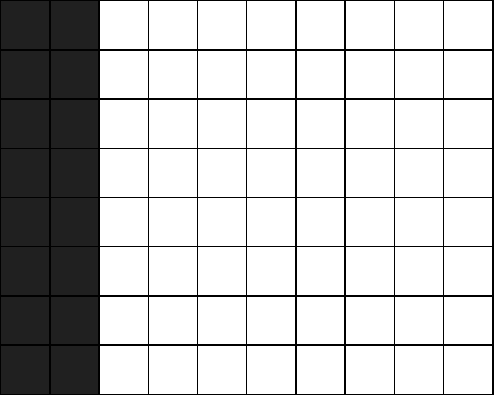}}
	\hspace{0.5cm}
	\subfloat[$\o^{(2)}_{K(L-2)}$]{\includegraphics{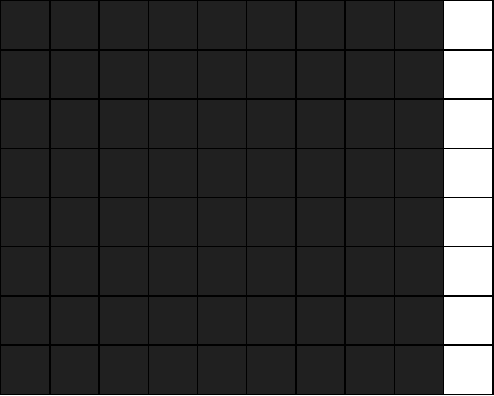}}
	\\
	\subfloat[$\o^{(2)}_{K(L-2)+1}$]{\includegraphics{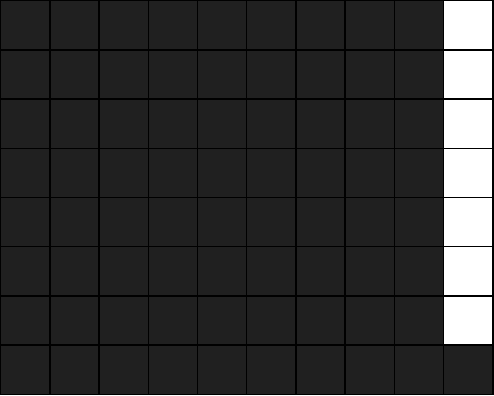}}
	\hspace{0.5cm}
	\subfloat[$\o^{(2)}_{K(L-2)+2}$]{\includegraphics{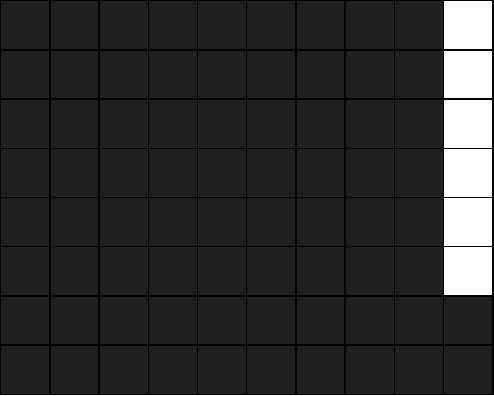}}
	\hspace{0.5cm}
	\subfloat[$\dd$]{\includegraphics{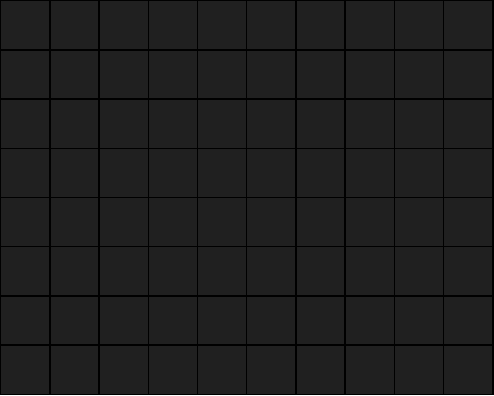}}
	\caption{Illustration of the reference path $\o^*: \cc \to \dd$ in the case $K\leq L$}
	\label{fig:cctodd}
\end{figure}
\captionsetup[subfigure]{labelformat=parens}

Indeed, coloring in black the first vertex $(0,0)$ creates new disagreements with its $4$ white neighbors, while for each $i=2,\dots,K-1$, the black coloring of vertex $(0,i-1)$ creates $3$ new disagreements, but resolves $1$ with respect to the previous configuration, so that the total amount of disagreements increases by $2$. The last vertex $(0,K-1)$ colored in black to obtain configuration $\o^{(1)}_{K}$ resolves $2$ disagreements and create $2$ new ones, resulting in a zero net energy difference with respect to $\o^{(1)}_{K-1}$. A schematic illustration of the path $\o^{(1)}$ can be found in the first five snapshots of Figure~\ref{fig:cctodd}. In view of~\eqref{eq:DHo1}, the configuration with the highest energy along $\o^{(1)}$ are $\o^{(1)}_{K-1}$ and $\o^{(1)}_{K}=\s^*$, since
\begin{equation}
\label{eq:DHsstar}
	\DH(\o^{(1)}_{K-1})=2 K=\DH(\s^*),
\end{equation}
and therefore $\Phi_{\o^{(1)}} = H(\s^*) = H(\cc) + 2 K$.

The newly obtained configuration $\s^*$ has a monochromatic black bridge on $c_0$ and as such is a suitable starting configuration for the expansion algorithm introduced earlier. In view of Proposition~\ref{prop:expansion}, such an algorithm outputs a path $\o^{(2)}: \s^* \to \dd$ such that 
\[
	\Phi_{\o^{(2)}} = H(\s^*) +2 \stackrel{\eqref{eq:DHsstar}}{=} H(\cc) + 2 K + 2.
\]

In the case where $\L$ has open boundary conditions, there is no need to define a different reference path, since the exact same reference path yields the desired identity.  The only thing one needs to do is reviewing the calculations for the maximum energy along the paths $\o^{(1)}$ and $\o^{(2)}$, which are now different in view of the open boundary conditions. More specifically the fact that the vertices in $c_0$ have no left neighbors and the properties of the expansion algorithm for grids with open boundary conditions (see Proposition~\ref{prop:expansion}) yield 
\[
	\Phi_{\o^{(1)}} = H(\cc) + K \quad \text{ and } \quad \Phi_{\o^{(2)}} = H(\cc) + K + 1.
\]
from which the conclusion readily follows.
\end{proof}
\FloatBarrier

\subsection{Communication energy between stable configurations}
\label{sub25}

Given a configuration $\s \in \cX$ and a spin value $k \in \{1,\dots,q\}$, let $B_k(\s) \in \N \cup \{0\}$ be the total number of $k$-bridges (horizontal and vertical) that configuration $\s$ has. The next lemma shows how this quantity evolves with single-spin updates and relates its increments with geometric properties of the spin configurations.

\begin{lem}[Bridges creation and deletion]
\label{lem:Bk}
Let $\s,\s' \in \cX$ be two Potts configuration that differ by a single-spin update, that is~$\left |\{v \in V : \s(v)\neq \s'(v)\} \right |=1$. Then for every $k \in \{1,\dots,q\}$ we have that
\[
	B_k(\s')-B_k(\s) \in \{-2,-1,0,1,2\},
\]
and $B_k(\s')-B_k(\s) = 2$ if and only if $\s'$ a $k$-cross that $\s$ does not have.
\end{lem}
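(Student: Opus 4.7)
The plan is to pinpoint the unique vertex $v$ where $\s$ and $\s'$ disagree, and observe that bridges on any row other than the one containing $v$ and any column other than the one containing $v$ are determined by spins that are identical in $\s$ and $\s'$. Therefore only at most one row and one column can gain or lose a bridge, which immediately caps $|B_k(\s')-B_k(\s)|$ by $2$ for every color $k$.

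Next I would refine this by cases on $k$ relative to the colors $a:=\s(v)$ and $b:=\s'(v)$. If $k \notin \{a,b\}$, then both $\s$ and $\s'$ have a vertex of color $\neq k$ at $v$, so neither the row nor the column of $v$ can host a $k$-bridge in either configuration; hence $B_k(\s')=B_k(\s)$. If $k=a$, flipping $v$ away from $a$ can only destroy $a$-bridges through $v$ and cannot create new ones, so $B_a(\s')-B_a(\s)\in\{-2,-1,0\}$. Symmetrically, if $k=b$, the flip can only create $b$-bridges through $v$, so $B_b(\s')-B_b(\s)\in\{0,1,2\}$. Putting these together proves the first claim.

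For the characterization of the extremal value $+2$, I would argue as follows. The increase $B_k(\s')-B_k(\s)=2$ requires exactly one new $k$-bridge on the row $r$ containing $v$ and one new $k$-bridge on the column $c$ containing $v$. Both new bridges pass through $v$ (otherwise they would already exist in $\s$, since all vertices off the row/column of $v$ have the same color in $\s$ and $\s'$), and both are monochromatic of color $k$ in $\s'$. By definition this is precisely a $k$-cross of $\s'$ centered at $v$. Conversely, if $\s'$ has a $k$-cross that $\s$ does not have, both arms of the cross must pass through $v$ (the only changed vertex); since $\s(v)\neq k=\s'(v)$ (the other case $\s(v)=k$ would preclude a $k$-cross in $\s'$ at $v$), in $\s$ neither the row nor the column of $v$ carried a $k$-bridge, while in $\s'$ both do, giving $B_k(\s')-B_k(\s)=2$.

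There is no real obstacle here: the argument is entirely local to the single flipped vertex, and the only thing to be slightly careful about is the converse direction of the cross characterization, where one must verify that the freshly created bridges are genuinely new, which follows from the fact that a bridge through $v$ in $\s$ would force $\s(v)=k$ and thereby preclude $\s$ from lacking the $k$-cross while $\s'$ has it.
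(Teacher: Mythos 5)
Your locality argument and the case analysis on $k$ versus $a=\s(v)$, $b=\s'(v)$ are correct and give a cleaner, more detailed account of the bound $B_k(\s')-B_k(\s)\in\{-2,-1,0,1,2\}$ than the paper's one-line justification; the forward implication (an increase of exactly $2$ forces one new bridge on the row of $v$ and one on the column of $v$, meeting at $v$ and forming a $k$-cross) also matches the paper's argument.

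The converse, however, has a genuine gap. You assert that if $\s'$ has a $k$-cross that $\s$ lacks, then ``both arms of the cross must pass through $v$''; nothing you wrote justifies this, and under the plain reading it is false. Take $\s$ with a horizontal $k$-bridge on some row $r$ not containing $v$, and with the column of $v$ entirely of color $k$ except at $v$ itself. Updating $v$ to color $k$ creates a vertical $k$-bridge, so $\s'$ now has a $k$-cross (row $r$ together with the column of $v$) whereas $\s$ had none; yet $B_k(\s')-B_k(\s)=1$, not $2$. The implication does go through if one reads ``a $k$-cross that $\s$ does not have'' as ``a $k$-cross both of whose arms are absent in $\s$'': then both new bridges necessarily lie on the row and the column of $v$ and the increment is exactly $+2$. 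The paper calls this direction ``trivial'' without writing it out; your attempt to make it explicit reveals that the claim needs the stricter reading, and the step ``both arms pass through $v$'' has to be derived from that hypothesis rather than from the mere existence of a new cross. (The paper only ever invokes the converse in the case $B_k(\s)=0$, where the two readings coincide.)
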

This lemma basically states that at most two bridges of a given color can be created or destroyed by a single-spin update and that if exactly two bridges are created together, then they must be orthogonal (one horizontal and one vertical).
\begin{proof}
The proof revolves around the simple observation that a single-spin update can create (or destroy) a bridge only in the row and/or in the column where it lies. Hence, by updating the spin of a given vertex to $k$, at most two $k$-bridges can be simultaneously created or destroyed. One implication of the second statement is trivial; for the converse one, observe that if exactly two $k$-bridges are created by a single-spin update, then they cannot be both horizontal or both vertical, and thus they intersect creating a $k$-cross.
\end{proof}

\begin{prop}[Communication energy lower bound]
\label{prop:lowerbound}
Consider the Potts model on a $K \times L$ grid with $\max\{K,L\} \geq 3$. Then, for every $\cc,\dd \in \ss$, with $\cc \neq \dd$, the following inequality holds
\begin{equation}
\label{eq:ineqch}
	 \Phi(\cc,\dd) -H(\cc) \geq  
	 \begin{cases}
	 2 \min\{K,L\}+2 & \text{ if } \L \text{ has periodic boundary conditions,}\\
	 \min\{K,L\}+1  & \text{ if } \L \text{ has open boundary conditions.}
	 \end{cases}
\end{equation}
\end{prop}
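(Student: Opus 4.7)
The plan is to fix an arbitrary path $\o: \cc \to \dd$ and exhibit an intermediate configuration along it whose energy gap $\DH$ is at least $\G(\L)$; by the definitions of $\Phi_\o$ and $\Phi(\cc,\dd)$, this implies the claimed inequality. By symmetry we may assume $K \leq L$, and I treat first the periodic case with target $\G(\L) = 2K + 2$. The critical time is $t^\star := \min\{t \geq 1 : \o_t \text{ has a monochromatic bridge of some color } k \neq \cc\}$, which is well-defined because $\o_N = \dd$ has $\dd$-bridges. By minimality $\o_{t^\star-1}$ has only $\cc$-bridges (if any), and the single-spin update $\o_{t^\star-1} \to \o_{t^\star}$ creates a fresh $k$-bridge. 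Up to interchanging rows and columns, this bridge is horizontal on some row $r^\star$, so in $\o_{t^\star-1}$ that row contains $L-1$ vertices of color $k$ and a single vertex $v = (c^\star, r^\star)$ of color $a \neq k$.

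The analysis then splits according to $a$ and the status of column $c^\star$. If $a \neq \cc$ (which requires $q \geq 3$), row $r^\star$ has no $\cc$-vertex, so no column can be a $\cc$-vertical-bridge in $\o_{t^\star-1}$; by Lemma~\ref{lem:zerowastage}(c)--(d) every column contributes $\DH_c \geq 2$ and every non-$\cc$-bridged row contributes $\DH_r \geq 2$, yielding $\DH(\o_{t^\star-1}) \geq 2K + 2L - 2h$, where $h$ denotes the number of horizontal $\cc$-bridges. Since $h \leq K-1 \leq L-1$, this gives $\DH(\o_{t^\star-1}) \geq 2K+2$, as required. If $a = \cc$ and column $c^\star$ is not a $\cc$-vertical-bridge, the identical counting applies. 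If $a = \cc$ and column $c^\star$ \emph{is} a $\cc$-vertical-bridge, the column sum loses a contribution of $2$ and the bound becomes $\DH(\o_{t^\star-1}) \geq 2K + 2L - 2h - 2$, which still exceeds $2K+2$ provided $h \leq L - 2$.

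The main obstacle is the residual subcase $K = L$ with $h = K-1$: here $\o_{t^\star-1}$ is a ``pre-stripe'' (every row except $r^\star$ is all-$\cc$ and column $c^\star$ is all-$\cc$), so $\o_{t^\star}$ is a one-row $k$-stripe of energy gap exactly $2L = 2K$, and neither configuration meets the target directly. To handle this I would invoke a basin-of-attraction argument. A short direct check starting from $\o_{t^\star}$ shows that every single-spin update preserving $\DH \leq 2K+1$ can only toggle a vertex on row $r^\star$ between the colors $\cc$ and $k$; any attempt to recolor a vertex outside $r^\star$, or to introduce a third color inside $r^\star$, raises $\DH$ by at least $2$ past the threshold. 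Hence the connected component of $\o_{t^\star}$ in the sublevel set $\{\sigma : \DH(\sigma) \leq 2K+1\}$ consists entirely of configurations where every row other than $r^\star$ is fully $\cc$, which still have $\cc$ as dominant color. Since $\dd$'s dominant color is $\dd \neq \cc$, the suffix of $\o$ beyond $t^\star$ must exit this sublevel set, exhibiting a configuration with $\DH \geq 2K+2$. The open-boundary case follows a parallel argument with constants halved: the estimates $\geq 2$ from Lemma~\ref{lem:zerowastage}(c)--(d) are replaced by the weaker $\geq 1$ bounds coming from (a)--(b), the critical saddle is a stripe attached to the boundary of energy $\min\{K,L\}$, and widening it costs $+1$, yielding the required bound $\min\{K,L\}+1$.
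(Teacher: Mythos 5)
Your stopping time $t^\star$ (first appearance of a bridge of any color $\neq \cc$) is genuinely different from the paper's, which tracks $B_k(\o_m)$ for the \emph{target} color $k$ with $\dd=\cc_k$ and stops at the first time $B_k \geq 2$ (or a black cross appears). The paper's choice guarantees more structure at $\o_{m^*-1}$ and yields $\DH(\o_{m^*-1}) \geq 2\min\{K,L\}+2$ directly in every scenario, using only the bridge lemmas and $\max\{K,L\}\geq 3$, with no residual case. Your stopping time is ``one bridge too early'': precisely in the subcase $K=L$, $a=\cc$, $h=K-1$ you isolate, the configurations $\o_{t^\star-1}$ and $\o_{t^\star}$ both have $\DH=2K$, not $2K+2$, and your argument must do more work.

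The basin-of-attraction patch you propose for that residual subcase has a genuine gap. You assert that the connected component of $\o_{t^\star}$ inside $\{\sigma : \DH(\sigma)\leq 2K+1\}$ consists only of configurations in which every row other than $r^\star$ is entirely $\cc$. Your ``short direct check'' only establishes this for single-spin updates \emph{from} $\o_{t^\star}$ itself; it is not an invariant of the component. Counterexample: starting from $\o_{t^\star}$, toggle vertices on row $r^\star$ back to $\cc$ until exactly one $k$-vertex remains; this configuration has $\DH = 4$. Now recolor a vertex adjacent to that $k$-vertex (in the row above or below $r^\star$) to $k$: the energy becomes $\DH = 6 \leq 2K+1$ whenever $K \geq 3$, and the resulting configuration has a non-$\cc$ vertex outside row $r^\star$. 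So the component is strictly larger than you claim, and the subsequent deduction (``since $\dd$'s dominant color is $\dd\neq\cc$, the suffix must exit the sublevel set'') is unjustified. In effect, proving that $\dd$ is not in the component of $\o_{t^\star}$ inside the sublevel set is equivalent to showing $\Phi(\o_{t^\star},\dd) - H(\cc) \geq 2K+2$, which is not easier than the original claim; the argument does not close. The cleanest repair is to abandon $t^\star$ in favor of the paper's stopping rule on $B_k$, which sidesteps the pre-stripe configuration entirely; alternatively one would have to carry out a genuine cycle analysis around the stripe, which you have not done.
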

\begin{proof}
Consider first the case where $\L$ has periodic boundary conditions. It is enough to show that along every path $\o: \cc \to \dd$ in $\cX$ there exists at least one configuration with energy gap not smaller than $2 \min\{K,L\}+2$. Let $k \in \{1,\dots,q\}$ be the spin value such that $\dd = \cc_k$. In the rest of the proof we will associate the color black to the spin value $k$ and in particular we will refer to $k$-bridges and $k$-crosses as black bridges and black crosses, respectively.

Consider a path $\o$ from $\cc $ to $\dd$ of length $n$, so that $\o=(\o_1,\dots,\o_n)$ with $\o_1=\cc$ and $\o_n=\dd$. Note that $\cc$ has no black bridges, \ie $B_k(\cc)=0$, while $\dd$ is has $B_k(\dd)=K+L$ black bridges. Hence, there exists a configuration along the path $\o$ that is the first to have at least two black bridges; let $m^* \in \N$ be the corresponding index, \ie
$
	m^*:=\min \{ m \leq n ~|~ B_k(\o_m) \geq 2 \}.
$
Consider the configuration $\o_{m^*-1}$ that precedes $\o_{m^*}$ in the path $\o$. We claim that the total energy gap of the configuration $\o_{m^*-1}$ satisfies the following inequality
\begin{equation}
\label{eq:Um}
	\DH(\o_{m^*-1}) \geq 2 \min\{K,L\}+2.
\end{equation}
\newpage
We prove this claim by considering separately three scenarios:
\begin{itemize}
	\item[\textup{(a)}] $\o_{m^*}$ displays only vertical black bridges;
	\item[\textup{(b)}]  $\o_{m^*}$ displays only horizontal black bridges;
	\item[\textup{(c)}]  $\o_{m^*}$ displays at least one black cross.
\end{itemize}

Consider first scenario (a), thus assuming that $\o_{m^*}$ displays only vertical black bridges. From the definition of $m^*$, it follows that $B_k(\o_{m^*-1})\leq 1$ and $B_k(\o_{m^*}) \geq 2$. Furthermore, the difference $B_k(\o_{m^*})-B_k(\o_{m^*-1})$ must be strictly smaller than $2$, since otherwise $\o_{m^*}$ would have a black cross in view of Lemma~\ref{lem:Bk}. Hence, 
\[
	B_k(\o_{m^*-1})=1 \quad \text{ and } \quad B_k(\o_{m^*})=2.
\]
Hence, configuration $\o_{m^*}$ has exactly two vertical black bridges, say on columns $c$ and $c'$, see an example in Figure~\ref{fig:phi_a}. \vspace{-0.5cm}
\begin{figure}[!h]
	\centering
	\subfloat[Configuration $\o_{m^*-1}$ with one black bridge on column $c$ and an incomplete one on column $c'$]{\includegraphics{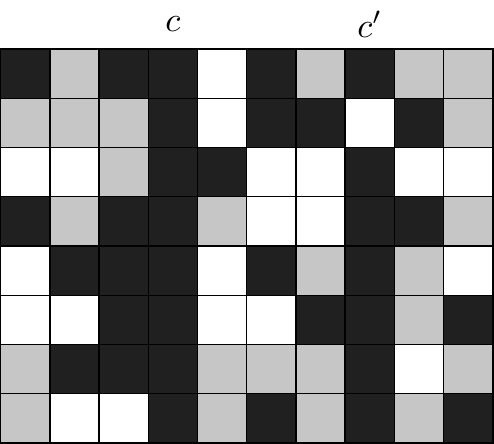}}
	\hspace{2cm}
	\subfloat[Configuration $\o_{m^*}$ with two black bridges on column $c$ and $c'$]{\includegraphics{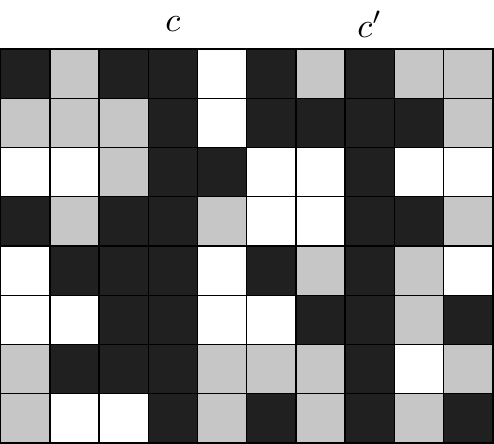}}
	\caption{Illustration of scenario (a)}
	\label{fig:phi_a}
\end{figure}
\FloatBarrier
Since $\o_{m^*-1}$ and $\o_{m^*}$ differ by a single-spin update and $B_k(\o_{m^*-1})=1$, it follows that configuration $\o_{m^*-1}$ has only one vertical $k$-bridge, say on column $c$, while it has all black vertices but one on column $c'$. In particular, $\o_{m^*-1}$ has no vertical bridge on column $c'$ and therefore, in view of Lemma~\ref{lem:zerowastage}(d),
\begin{equation}
\label{eq:Uva}
	\DH_{c'}(\o_{m^*-1}) \geq 2.
\end{equation}
We claim that $\o_{m^*-1}$ cannot have any horizontal bridge. Indeed:
\begin{itemize}
	\item the presence of a black horizontal bridge in some row would imply that $B_k(\o_{m^*-1}) \geq 2$ (since $\o_{m^*-1}$ has by construction at least a vertical black bridge on column $c$), contradicting the definition of $m^*$;
	\item there cannot be non-black horizontal bridges either in view of the black bridge in column $c$ and Lemma~\ref{lem:monochromaticcross}.
\end{itemize} 
The absence of horizontal bridges together with Lemma~\ref{lem:zerowastage}(c) then yields that $\DH_r(\o_{m^*-1}) \geq 2$ for every row $r$ and thus
\begin{equation}
\label{eq:Uha}
	\sum_{i=0}^{K-1} \DH_{r_i} (\o_{m^*-1}) \geq 2 K.
\end{equation}
In view of identity~\eqref{eq:DHhv}, inequalities~\eqref{eq:Uva} and~\eqref{eq:Uha} together yields
\begin{equation}
\label{eq:Ua}
	\DH(\o_{m^*-1}) \geq \DH_{c'}(\o_{m^*-1}) + \sum_{i=0}^{K-1} \DH_{r_i} (\o_{m^*-1}) \geq 2K+2.
\end{equation}

In scenario (b) we can argue like in (a) but interchanging the role of rows and columns, and obtain the following inequality
\[
	\DH(\o_{m^*-1}) \geq 2 L +2.
\]

Consider now scenario (c), where we assume $\o_{m^*}$ displays at least one black cross. By definition of $m^*$, the quantity $B_k(\o_{m^*-1})$ can take only two values, $0$ or $1$, and we consider these two cases separately.\\

Assume first that $B_k(\o_{m^*-1})=0$, which means that $\o_{m^*-1}$ has no black bridges, see an example in Figure~\ref{fig:phi_c1}. \vspace{-0.5cm}
\begin{figure}[!h]
	\centering
	\subfloat[Configuration $\o_{m^*-1}$]{\includegraphics{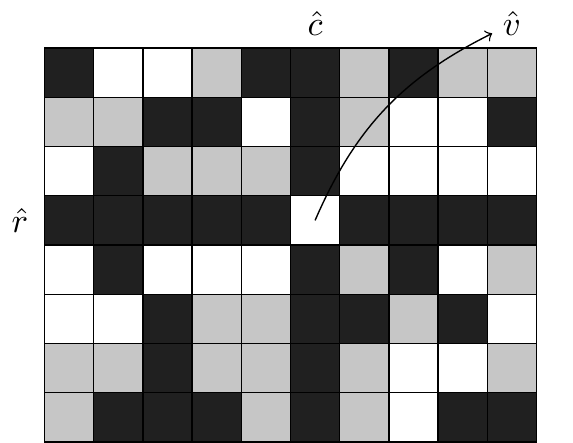}}
	\hspace{2cm}
	\subfloat[Configuration $\o_{m^*}$]{\includegraphics{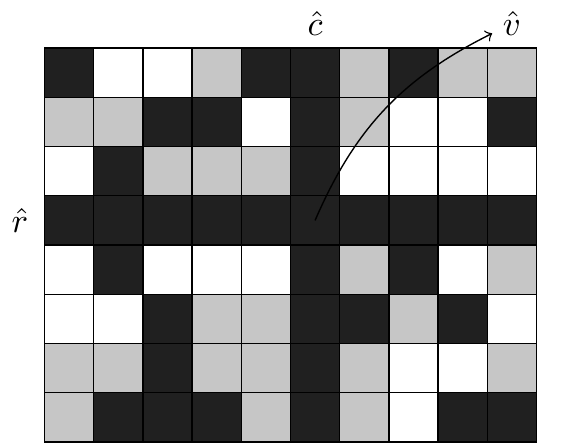}}
	\caption{Illustration of scenario (c) where configuration $\o_{m^*-1}$ is such that $B_k(\o_{m^*-1})=0$}
	\label{fig:phi_c1}
\end{figure}
\FloatBarrier
Since $\o_{m^*-1}$ and $\o_{m^*}$ differ by a single-spin update, Lemma~\ref{lem:Bk} gives that $B_k(\o_{m^*})\geq 2$ and thus we can conclude that $B_k(\o_{m^*})=2$, in view of the definition of $m^*$.

Lemma~\ref{lem:Bk} implies further that $\o_{m^*}$ displays a unique black cross. Let $\hat{r}$ and $\hat{c}$ be respectively the row and the column on which such cross lies. Since $B_k(\o_{m^*-1})=0$, the horizontal and vertical black bridges that $\o_{m^*}$ has must have been created simultaneously from configuration $\o_{m^*-1}$ by updating the spin in the vertex, say $\hat{v}$, where $\hat{r}$ and $\hat{c}$ intersect. Hence, by construction,
\[
	\o_{m^*-1}(v) = k \quad \forall \, v \in  \hat{r} \cup \hat{c}, \, v \neq \hat{v}.
\]
Since there is a black vertex in every row and in every column, configuration $\o_{m^*-1}$ cannot have non-black (horizontal or vertical) bridges. This fact, together with our assumption that $B_k(\o_{m^*-1})=0$, implies that $\o_{m^*-1}$ has \textit{no} bridges of \textit{any} color, i.e.
\[
	B_l(\o_{m^*-1})=0 \quad \forall \, l \in \{1,\dots,q\}.
\]
Therefore, thanks to Lemma~\ref{lem:zerowastage}(c) and (d), the energy gap is not smaller than $2$ in every row and column, and, hence,
\[
		\sum_{i=0}^{K-1} \DH_{r_i} (\o_{m^*-1}) \geq 2 K \quad \text{ and } \quad \sum_{j=0}^{L-1} \DH_{c_j} (\o_{m^*-1}) \geq 2 L.
\]
In view of identity~\eqref{eq:DHhv}, the latter two inequalities yield
\[
	\DH(\o_{m^*-1}) \geq  2K +2L \geq 2\min\{K,L\} + 2 \max\{K,L\} > 2\min\{K,L\} + 2.
\]

Consider now the scenario in which $B_k(\o_{m^*-1})=1$, which means that $\o_{m^*-1}$ has a unique black bridge, see Figure~\ref{fig:phi_c2} for an example.
\vspace{-0.5cm}
\begin{figure}[!h]
	\centering
	\subfloat[Configuration $\o_{m^*-1}$]{\includegraphics{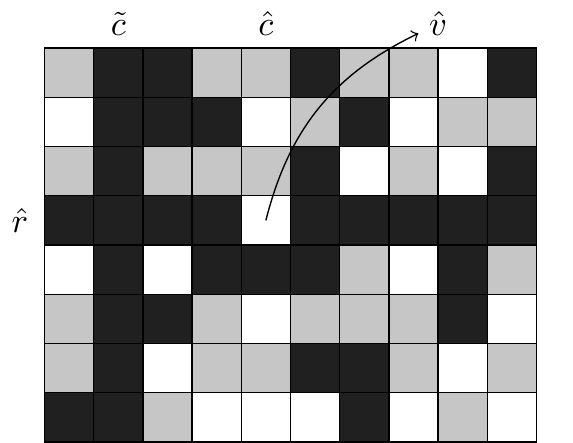}}
	\hspace{2cm}
	\subfloat[Configuration $\o_{m^*}$]{\includegraphics{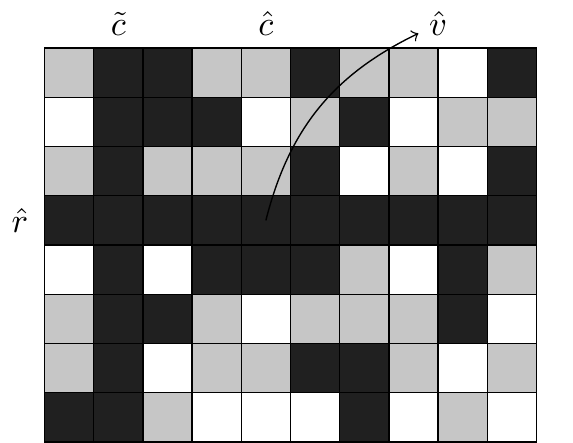}}
	\caption{Illustration of scenario (c) where configuration $\o_{m^*-1}$ is such that $B_k(\o_{m^*-1})=1$}
	\label{fig:phi_c2}
\end{figure}
\FloatBarrier
We will assume that such a black bridge is vertical and that lies in column $\tilde{c}$; if instead it is horizontal, the proof is identical after interchanging the role of rows and columns and leads precisely to the same lower bound for $\DH(\o_{m^*-1})$. By virtue of Lemma~\ref{lem:monochromaticcross}, the presence of the vertical black bridge in column $\tilde{c}$ makes impossible the existence of any horizontal non-black bridge in configuration $\o_{m^*-1}$. Furthermore, by assumption $\o_{m^*-1}$ has no horizontal black bridges and Lemma~\ref{lem:zerowastage}(c) then yields
\begin{equation}
\label{eq:Uhc}
	\sum_{i=0}^{K-1} \DH_{r_i} (\o_{m^*-1}) \geq 2 K.
\end{equation}
Since $\o_{m^*-1}$ and  $\o_{m^*}$ differ by a single-spin update, the presence of a black cross $\o_{m^*}$ and the absence of horizontal black bridges in $\o_{m^*-1}$ imply that $\o_{m^*}$ has a unique horizontal black bridge, say on row $\hat{r}$. By construction, the vertex, say $\hat{v}$, where $\o_{m^*}$ and $\o_{m^*-1}$ differ must lie in such a row, and
\[
	\o_{m^*-1}(\hat{v}) \neq k \quad \text{ and } \quad \o_{m^*-1}(v) = k \quad \forall \, v \in \hat{r}, v\neq \hat{v}
\]
Let $\hat{c}$ be the column where $\hat{v}$ lies. The black vertices in row $\hat{r}$ implies that configuration $\o_{m^*-1}$ has no vertical $l$-bridge with $l\neq k$ in every column $c \neq \hat{c},\tilde{c}$. Lemma~\ref{lem:zerowastage}(d) then yields that in each of these $L-2$ columns the energy gap is greater than or equal to $2$ and thus
\begin{equation}
\label{eq:Uvc}
	\sum_{j=0}^{L-1} \DH_{c_j} (\o_{m^*-1}) \geq 2(L-2) = 2 L - 4.
\end{equation}
From inequalities~\eqref{eq:Uhc} and~\eqref{eq:Uvc} it follows that
\[
	\DH(\o_{m^*-1}) \geq 2K+2L-4 \geq 2\min\{K,L\} + 2 \max\{K,L\} - 4 \geq 2\min\{K,L\} + 2,
\]
where the last inequality holds since $\max\{K,L\} \geq 3$.\\

The proof of inequality~\eqref{eq:ineqch} in the case where $\L$ has open boundary condition is very similar and thus omitted. The only tweak necessary is easy to explain and is a consequence of the fact that the lower bound for the energy gap on rows or columns without bridges is different due to the open boundary conditions. As illustrated in Lemma~\ref{lem:zerowastage}, a row or column without bridges has energy gap not smaller than $2$ when $\L$ has periodic boundary conditions, while we only know that is non-zero (and in particular greater than or equal to $1$) when $\L$ has open boundary conditions. By adjusting this factor in all the inequalities derived above, one gets the desired lower bound for the communication energy $\Phi(\cc,\dd)$ for the case of open boundary conditions, which is precisely half of that obtained in the case of periodic boundary conditions.
\end{proof}

\subsection{Proof of Theorem~\ref{thm:chadw}}
\label{sub26}
In this subsection we combine the results obtained in the previous subsections and prove the structural properties of the energy landscape that have been presented in Theorem~\ref{thm:chadw}.\\

The proof of statement (i) readily follows by combining the reference path constructed in Propositions~\ref{prop:refpath} (which yields an upper bound for $\Phi(\cc,\dd)$) with the matching lower bound obtained in Proposition~\ref{prop:lowerbound}.\\

We now focus on the proof of statement (ii). We first prove the result for grids (a) with periodic boundary conditions and then later (b) with open boundary conditions.\\

(a) Consider a configuration $\s \in \cX \setminus \ss$. If configuration $\s$ has a (vertical or horizontal) $k$-bridge for some $k=1,\dots,q$, then $\s$ is a suitable starting configuration for the expansion algorithm can be used to build a path $\o: \s \to \cc_k$ such that $\Phi_\o \leq H(\s) + 2$.

Consider now the opposite scenario, the one where $\s$ has no (vertical or horizontal) bridge. Take the column, say $c^*$, with the largest number of vertices of the same color, say black, and let $k$ be the associated spin value. Define
\[
	\s^*(v):=
	\begin{cases}
		\s(v) & \text{ if } v \in V \setminus c^*,\\
		k  & \text{ if } v \in c^*.
	\end{cases}
\]
Denote by $m$ the number of vertices in which configurations $\s$ and $\s^*$ differ, that is $m :=|\{v \in V ~:~ \s(v) \neq \s^*(v) \}|$. Note that $m$ is precisely the number of non-black vertices that configuration $\s$ has on column $c^*$, since
\[
	\{v \in V ~:~ \s(v) \neq \s^*(v) \} =\{v \in c^* ~:~ \s(v) \neq k \}.
\]
In particular, by construction $m <K = |c^*|$. We will define a path from $\s$ to $\s^*$ in which the $m$ non-black vertices are progressively colored in black. The order in which these vertices are updated is crucial to obtain the desired bound for $\Phi_\o$. More specifically, we build a path $\o^{(1)}: \s \to \s^*$ of length $m$ that starts at $\o^{(1)}_0= \s$ and is construct inductively as follows: for every step $i=1,\dots,m$, 
\begin{itemize}
	\item[(1)] Consider a vertex $v_i \in c^*$ such that (i) $\o^{(1)}_{i-1}(v) \neq k$ and (ii) has at least one black neighbors on column $c^*$;
	\item[(2)] Define the configuration $\o^{(1)}_i$ from $\o^{(1)}_{i-1}$ by coloring vertex $v_i$ as black, i.e.
	\[
		\o^{(1)}_i(v):=
		\begin{cases}
			\o^{(1)}_{i-1}(v) & \text{ if } v \neq v_i,\\
			k	 						& \text{ if } v=v_i.
		\end{cases}
	\]
\end{itemize}
The way in which the vertices $v_1,\dots,v_m$ of column $c^*$ are progressively chosen guarantees that for every $i=1,\dots, m-1$
\[
	\DH(\o^{(1)}_i) \leq \DH(\o^{(1)}_{i-1})+2,
\]
since at most two disagreements are created by coloring vertex $v_i$ as black, and that
\[
	\DH(\o^{(1)}_m) \leq \DH(\o^{(1)}_{m-1}),
\]
since vertex $v_m$ has by construction exactly two black neighbors on column $c^*$. Hence, the path $\o^{(1)}$ is such that $\Phi_{\o^{(1)}} - H(\s) \leq 2 (m-1)$. Configuration $\s^*$ has a vertical black bridge and thus the expansion algorithm yields a path $\o^{(2)}: \s^* \to \cc_k$ such that $\Phi_{\o^{(2)}} - H(\s^*) \leq 2$. The concatenation of  $\o^{(1)}$ and  $\o^{(2)}$ then is a path from $\s$ to $\cc_k$ that guarantees that $\Phi(\s,\ss)-H(\s) \leq 2(m-1) +2 \leq 2m < 2K < 2K + 2$.\\

(b) We consider now the case of open boundary conditions, in which we only briefly need to review the calculations already done in (a). If configuration $\s$ has a monochromatic bridge, then the expansion algorithm guarantees that $\Phi(\s,\ss)-H(\s) \leq 2$. If there is no bridge, then define the configuration $\s^*$ obtained from $\s$ by coloring as black all the vertices on the first column, i.e.
\[
	\s^*(v):=
	\begin{cases}
		\s(v) & \text{ if } v \in V \setminus c_0,\\
		k  & \text{ if } v \in c_0.
	\end{cases}
\]
As we did in (a), we will construct a path from $\s$ to $\ss$ using configuration $\s^*$ as intermediate configuration. As before, let $m$ denote the number of vertices in which configuration $\s$ and $\s^*$ differ. By progressively coloring them as black, always updating a vertex with at least one black neighboring vertex on $c_0$, the energy cost is no larger than $1$ for every vertex newly colored in black thanks to the open boundary conditions. In particular, coloring the last non-black vertex on column $c_0$ costs $0$ or less, since by construction it had at most one disagreeing neighbor. In this way we have build a path such that
\begin{equation}
\label{eq:blackbridgeopen}
	\Phi(\s,\s^*)-H(\s) \leq m-1.
\end{equation}
Having a black bridge, $\s^*$ is a suitable starting configuration for the expansion algorithm that yields a path to the stable configuration with all black vertices, obtaining in this way a path from $\s$ to $\cc_k \in \cX$. Combining inequality~\eqref{eq:blackbridgeopen} with that given by Proposition~\ref{prop:expansion}, one obtains that
\[
	\Phi(\s,\ss)-H(\s) \leq (m-1) +1 \leq m < K <K+1. \qed
\]

\section{Proof of Theorem~\ref{thm:main}}
\label{sec3}
In this section we present the proof of Theorem~\ref{thm:main}, which combines the model-independent results derived in~\cite{NZB16} with the structural properties of the energy landscape presented in Theorem~\ref{thm:chadw}.

\subsection{Asymptotic behavior of hitting times (Proof of Theorem~\ref{thm:main}(i)-(ii))}
\label{sub31}
Consider the target stable configuration $\dd \in \ss$. We first claim that
\begin{equation}
\label{eq:claim}
	 \forall \, \s\neq \dd \qquad \Phi(\s,\dd) - H(\s) \leq \G(\L).
\end{equation}
If $\s \in \ss \setminus \{\dd\}$, then the inequality follows immediately from the reference path given in Proposition~\ref{prop:refpath} in combination with Theorem~\ref{thm:chadw}(i). In the opposite case, if $\s \not\in \ss$ property~\eqref{eq:adw} in Theorem~\ref{thm:chadw}(ii) guarantees that there exists a stable configuration $\cc^* \in \ss$ such that 
\[
	\Phi(\s,\cc^*)-H(\s) < \G(\L),
\]
which means that there exists a path $\o^*: \s \to \cc^*$ with $\Phi_{\o^*}-H(\s) < \G(\L) $. If $\cc^*=\dd$, then the claim in~\eqref{eq:claim} is proved. Otherwise, we can create a path  $\o: \s \to \dd$ as concatenation of two paths, $\o^{(1)}=\o^*$ and $\o^{(2)}: \cc^* \to \dd$ as the reference path given in Proposition~\ref{prop:refpath}. It is immediate to check that the resulting path $\o: \s \to \dd$ satisfies $\Phi_\o - H(\s) \leq \G(\L) $, and thus~\eqref{eq:claim} holds also in this case.\\
In view of the inequality~\eqref{eq:claim} \cite[Proposition 3.18]{NZB16} holds and one concludes by applying~\cite[Corollary 3.16]{NZB16} and~\cite[Theorem 3.19]{NZB16}.\\

\subsection{Asymptotic exponentiality of $\tau^{\cc}_{\ss \setminus \{\cc\}}$ (Proof of Theorem~\ref{thm:main}(iii))}
\label{sub32}
Since the statement of Theorem~\ref{thm:chadw}(i) holds for any pair of stable configurations, it follows that
\[
	 \forall \, \cc \in \ss \qquad \Phi(\cc,\ss \setminus \{\cc\}) - H(\cc) = \G(\L).
\]
Combining this identity with inequality~\eqref{eq:adw} in Theorem~\ref{thm:chadw}(ii) immediately yields that
\begin{equation}
\label{eq:asymexpcond}
	 \forall \, \cc \in \ss \qquad \max_{\s \in \cX \setminus \ss} \Phi(\s,\ss)-H(\s) < \Phi(\cc,\ss \setminus \{\cc\}) - H(\cc).
\end{equation}
This inequality means that, in view of the target set $\ss \setminus \{ \cc\}$, the cycle where the starting configuration $\cc$ lies is the deepest cycle of the energy landscape $(\cX \setminus \ss) \cup \{\cc\}$ and thus the exit time from this cycle dominates the tunneling time from $\ss$ to the target set $\ss \setminus \{ \cc\}$. Applying first~\cite[Proposition 3.20]{NZB16} and then~\cite[Theorem 3.19]{NZB16} the proof is concluded.

\subsection{Asymptotic exponentiality of $\tcd$ (Proof of Theorem~\ref{thm:main}(iv))}
\label{sub33}
First of all notice that when $q=2$, statements (iii) and (iv) coincide and thus there is nothing to prove.

In the case $q>2$, although statements (iii) and (iv) look very similar, the proof of the asymptotic exponentiality of the scaled tunneling time $\tcd$ does not immediately follow from the structural properties of the energy landscape and this is the reason why it is presented separately. Indeed in this case the subset $\ss \setminus \{\cc,\dd\}$ is not empty, which means that there exists at least a third stable configuration $\eta \in \ss \setminus \{\cc.\dd\}$ such that
\[
	\Phi(\cc,\dd) - H(\cc) \not >  \Phi(\eta, \dd) - H(\eta),
\]
as both the left-hand and right-hand sides are equal to $\G(\L)$ by Theorem~\ref{thm:chadw}. This means that when the target state is a precise stable configuration $\dd$, the condition analogous to~\eqref{eq:asymexpcond} does not hold anymore, as the energy landscape $\cX \setminus \{\dd\}$ has several equally deep cycles and not a unique one as in Subsection~\ref{sub32}. The Markov chain may be trapped in any of these cycles  and the exit times from them do not vanish in the limit $\binf$ and therefore they must be considered to determine the asymptotic distribution of $\tcd / \E \tcd$. From a technical standpoint, the fact that the condition analogous to~\eqref{eq:asymexpcond} does not hold in this case means that we cannot apply directly~\cite[Proposition 3.20]{NZB16} and~\cite[Theorem 3.19]{NZB16} as we did in the previous subsection.

The proof of the asymptotic exponentiality of $\tcd$ is thus obtained leveraging Theorem~\ref{thm:main}(iii) in combination with a stochastic representation of the tunneling time $\tcd$ that exploits the intrinsic symmetries of the energy landscape $(\cX,H,Q)$ corresponding to the $q$-state Potts model on $\L$. 

For any $k,l \in \{1,\dots, q\}$, $k\neq l$, define $\Psi_{k,l}: \cX \to \cX$ the mapping that associate to a configuration $\s \in \cX$ another configuration $\s' =\Psi_{k,l}(\s)$ such that
\begin{equation}
\label{eq:automorphism}
	\s'(v) = 
	\begin{cases}
		\s(v) & \text{ if } \s(v) \neq k,l, \\
		k & \text{ if } \s(v) =l, \\
		l & \text{ if } \s(v) =k.
	\end{cases}	
\end{equation}
The configuration $\Psi_{k,l}(\s)$ is thus obtained from $\s$ by interchanging every spin with value $k$ with a spin with value $l$ and vice-versa, while leaving all the other $q-2$ spin values unchanged. In other word, the automorphism $\Psi_{k,l}$ swaps two colors (corresponding to spins $K$ and $l$) while keeping the remaining $q-2$ colors fixed, see an example in Figure~\ref{fig:automorphism} for the $4$-state Potts model using the color convention $\{1,2,3,4\} \longleftrightarrow \{\tikz\draw[fill=white] (0,0) circle (.75ex); , \, \tikz\draw[fill=gray!45!white] (0,0) circle (.75ex); , \, \tikz\draw[fill=gray] (0,0) circle (.75ex); , \, \tikz\draw[fill=black!75!gray] (0,0) circle (.75ex); \}$.

\begin{figure}[!ht]
	\centering
	\subfloat[A Potts configuration $\s$]{\makebox[1.2\width]{\includegraphics{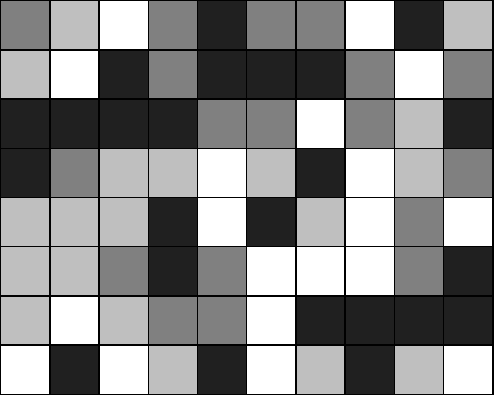}}}
	\hspace{1.2cm}
	\subfloat[The Potts configuration $\Psi_{1,4}(\s)$, where white and black colors have been inverted]{\makebox[1.2\width]{\includegraphics{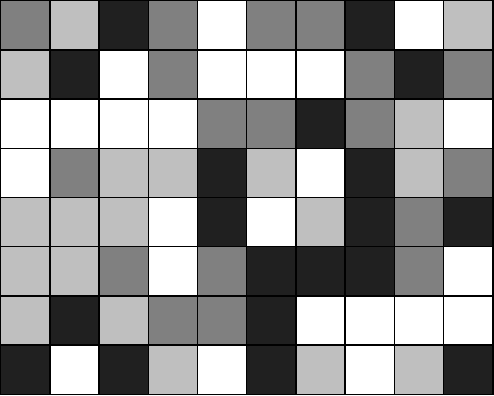}}}
	\\
	\subfloat[The Potts configuration $\Psi_{2,4}(\s)$, where light gray and black colors have been inverted]{\makebox[1.2\width]{\includegraphics{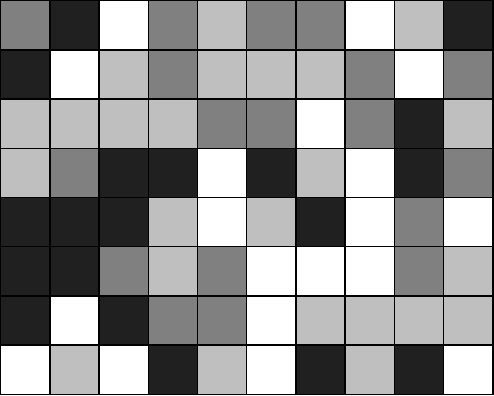}}}
		\hspace{1.2cm}
	\subfloat[The Potts configuration $\Psi_{3,4}(\s)$, where gray and black colors have been inverted]{\makebox[1.2\width]{\includegraphics{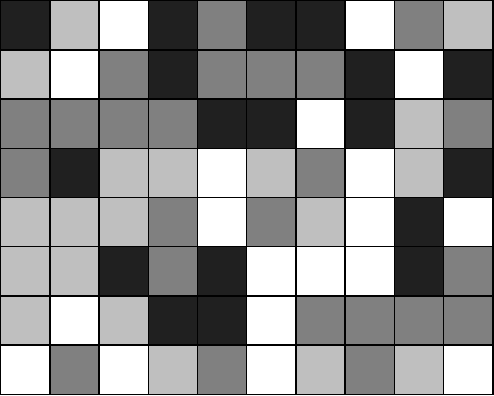}}}
	\caption{Examples of the automorphisms  $\Psi_{1,4}$,  $\Psi_{2,4}$, and  $\Psi_{3,4}$ for the Potts model with $q=4$ on the $8 \times 10$ grid}
	\label{fig:automorphism}
\end{figure}

Exploiting the family of automorphisms $ \{ \Psi_{k,l}\}_{k,l=1,\dots , q}$ and arguing like in~\cite[Proposition 2]{Zocca2017b} or in~\cite[Proposition 4.1]{Zocca2017} we can construct a coupling between different copies of the Markov chain $\xtbb$ and show that for any $\cc \in \ss$ and at any temperature $\b>0$ the following properties hold:
\begin{itemize}
	\item[\textup{(i)}] The random variable $X_{\tau^{\cc}_{\ss \setminus \{\cc\}}}$ has a uniform distribution over $\ss \setminus \{\cc\}$;
	\item[\textup{(ii)}] The distribution of the random variable $\tau^{\cc}_{\ss \setminus \{\cc\}}$ does not depend on $\cc$;
	\item[\textup{(iii)}] The random variables $\tau^{\cc}_{\ss \setminus \{\cc\}}$ and $X_{\tau^{\cc}_{\ss \setminus \{\cc\}}}$ are independent.
\end{itemize}

We will now leverage these properties to derive a stochastic representation of the tunneling time $\tcd$. Let $N_q$ be the random variable that counts the number of non-consecutive visits to stable configurations in $\ss \setminus \{\cc\}$ until the configuration $\dd$ is hit, counting as first visit the configuration $\cc$ where we assume the Markov chain starts at time $t=0$. Non-consecutive visits means that we count as actual visit to a stable configuration only the first one after the last visit to a different stable configuration. Property (ii) implies that the random time between these non-consecutive visits does not depend on the last visited stable configuration. In view of property (i), the random variable $N_q$ is geometrically distributed with success probability equal to $(q-1)^{-1}$, \ie
\begin{equation}
\label{eq:Nq}
	\prin{N_q = m} = \left ( 1- \frac{1}{q-1}\right )^{m-1} \frac{1}{q-1}, \quad m \geq 1.
\end{equation}
In particular, note that $N_q$ depends only on $q$ and \textit{not} on the inverse temperature $\b$. The amount of time $\tau^{\cc}_{\ss \setminus \{\cc\}}$ it takes for the Metropolis Markov chain started in $\cc \in \ss$ to hit any stable configuration in $\ss \setminus \{\cc\}$ does not depend on $\cc$, by virtue of property (ii). In view of these considerations and using the independence property (iii), we deduce that for $\cc,\dd \in \ss$, $\cc \neq \dd$
\begin{equation}
\label{eq:geometricrepresentation}
	\tau^{\cc}_{\dd} \ed \sum_{i=1}^{N_q} \tau^{(i)},
\end{equation}
where $\{\tau^{(i)}\}_{i \in \N}$ is a sequence of i.i.d.~random variables distributed as $\tau^{\cc}_{\ss \setminus \{\cc\}}$ and $N_q$ is an independent geometric random variable with success probability $1/(q-1)$ as defined in~\eqref{eq:Nq}. In particular, since both random variables $N_q$ and $\tau^{\cc}_{\ss \setminus \{\cc\}} \ed \tau^{(i)}$ have finite expectation and $\E N_q = q-1$, it immediately follows from Wald's identity that $\E \tau^{\cc}_{\dd}  = (q-1) \cdot \E \tau^{\cc}_{\ss \setminus \{\cc\}}=(q-1) \cdot \E \tau^{(i)}$. Thus, we can rewrite~\eqref{eq:geometricrepresentation} as
\[
	\frac{\tcd}{\E \tcd} \ed \frac{1}{\E N_q} \sum_{i=1}^{N_q} \frac{\tau^{(i)}}{\E \tau^{(i)}}.
\]
Using the fact that $\tau^{(i)} / \E \tau^{(i)} \cd \mathrm{Exp}(1)$ for every $i$ as $\binf$ by virtue of Theorem~\ref{thm:main}(iii), we obtain that $\tcd / \E \tcd$ is asymptotically distrbuted as geometric sum of i.i.d.~unit-mean exponential random variables, which is also exponentially distributed. The resulting exponential distribution of $\tcd /\E \tcd$ has also unit mean, as the geometric sum is scaled by its mean $\E N_q$.\\

\textbf{Remark:} Note that the proof presented in this subsection did not use the fact that $\L$ is a grid graph. Indeed the definition of the family of automorphisms $ \{ \Psi_{k,l}\}_{k,l=1,\dots , q}$ does not depend on the underlying structure of the graph $\L$ and neither the rest of the proof. This means that the statement (iv) in Theorem~\ref{thm:main} would hold for any finite graph $G$, as long as the $q$-state Potts model on such graph $G$ is such that  $\smash{\tau^{\cc}_{\ss \setminus \{\cc\}}/ \E \tau^{\cc}_{\ss \setminus \{\cc\}} \cd \rmexp(1)}$ as $\binf$ for any stable configuration $\cc \in \ss$.

\subsection{Mixing times (Proof of Theorem~\ref{thm:main}(v))}
\label{sub34}
By combining Theorem~\ref{thm:chadw}(i) and (ii), it is easy to check that
\[
	 \max_{\s \neq \cc} \Phi(\s,\cc) -H(\s) = \G(\L) \quad \forall \, \cc \in \ss,
\]
and the statements for both the mixing time and the spectral gap then follow from~\cite[Proposition 3.24]{NZB16}.

\bibliographystyle{plain}
\bibliography{potts_final}
\end{document}